\documentclass[reqno, 12pt, final]{amsart}

\usepackage{amssymb,amsfonts,showkeys,graphicx,amsmath,amsthm,amstext,latexsym,amsfonts,mathrsfs}
\textwidth=137mm
\usepackage[all]{xy}
\usepackage[latin1]{inputenc}
\usepackage{pstricks,pst-plot}
\addtolength{\textwidth}{1.7cm}    
\addtolength{\hoffset}{-1cm}

%%%%%%%%%%%%%%%%%%%%%%%%%%%%%% Textclass specific LaTeX commands.
 \theoremstyle{plain}
 \newtheorem{theorem}{Theorem}[section]
 \newtheorem{definition}[theorem]{Definition}
 \newtheorem{lemma}[theorem]{Lemma}
 \newtheorem{corollary}[theorem]{Corollary}
 \numberwithin{equation}{section} %% Comment out for sequentially-numbered
 \numberwithin{figure}{section} %% Comment out for sequentially-numbered
 \newtheorem{prop}[theorem]{Proposition}
 \theoremstyle{remark}
 \newtheorem{remark}[theorem]{Remark}
 
 \newtheorem{example}[theorem]{Example}
\newtheorem*{acknowledgment*}{Acknowledgment}

\def\lip{{\rm Lip}}
\def\eps{\varepsilon}
\def\R{{\mathbb R}} 
\def\N{{\mathbb N}}

\def\M{{\mathcal M}} 
\def\C{{\mathcal C}}

\def\P{{\mathcal P}}

\title[Transport and Disintegrations]{Transport Problems and Disintegration Maps}
%\title[Minimizing metric currents in Wasserstein spaces]{Minimizing metric currents in Wasserstein spaces}
\author{Luca Granieri, Francesco Maddalena}
\address{Dipartimento di Matematica Politecnico di Bari, via Orabona 4, 70125 Bari, Italy}
\email{l.granieri@poliba.it, f.maddalena@poliba.it}
\date{}
\subjclass{}

%~~~~~~~~~~~~~~~~~~~~
\begin{document}
\begin{abstract}
By disintegration of transport plans it is introduced the notion of transport class. This allows to consider  the Monge problem   as a particular case of the Kantorovich transport problem, once a transport class is fixed. 
The transport problem constrained to a fixed transport class is equivalent to an
abstract Monge problem over a Wasserstein space of probability measures.
 Concerning solvability of this kind of constrained problems, it turns out that in some sense the Monge problem corresponds to a \emph{lucky} case. 
\end{abstract}

\maketitle
%\begin{center}\large\sffamily DRAFT\end{center}
\tableofcontents
\begin{flushleft}
  {\bf Keywords.\,} Optimal Mass Transportation Theory.  Monge-Kantorovich  Problem.
  Calculus of Variations.  Shape Analysis.  Geometric Measure Theory.   \\ 
  {\bf MSC 2000.\,} 37J50, 49Q20, 49Q15.
\end{flushleft}

\section*{Introduction}
%\subsection{The Monge-Kantorovich problem}
Optimal transport problems, also known as Monge-Kantorovich problems,
have been very intensively studied in the last  years giving rise  to
 numerous and important applications to PDE, Shape Optimization and
Calculus of Variations, so we witnessed a spectacular development of the
field.
The
interested reader may look at the monographs and  lecture notes
\cite{a1, a-g-s, gangbo,  granieri,pratelli, r-r,   v,v2} where the subject is fully developed.
%\section{Basic facts on Monge Kantorovich problems}
%\label{Monge-Kantorovich} 

Let us briefly recall the formulations of the Monge-Kantorovich problems.

Let  $X,Y$  be two compact metric spaces and  let 
$c:X\times Y \to \R^+$ be a Borel cost function.
The Monge problem is formulated as follows: given two probability
measures $\mu \in \P(X)$, $\nu\in \P(Y)$ find a measurable map $t:X \to Y$ such that $t_\#\mu=\nu$  ($\#$ denotes the push-forward of measures) and such
that $t$ minimizes the total cost, i.e. 
\begin{equation}
\label{Monge}
\min_{t:X\rightarrow Y}\left\{\int_X c(x,t(x))\ d \mu,\;\vert\,t_\#\mu=\nu\right\}
\end{equation}

It may happens that the set of
admissible maps is empty (e.g. $\mu=\delta_x$ and $\nu=
\frac{1}{2}(\delta_y+\delta_z)$). Then the problem could be reformulated in
its Kantorovich's relaxation:
find $\gamma \in \P(X \times Y)$ such that 
$\pi ^1 _\# \gamma=\mu$ and $\pi^2 _\# \gamma =\nu$ ($\pi^1$ and $\pi^2$ are the
projections  on the factors of $X \times Y$) and such that $\gamma$ minimizes
the total cost, i.e.
\begin{equation}
\label{Monge-Kantorovich}
\min_{\gamma}\left\{\int_{X\times Y} c(x,y)\ d\gamma(x,y)\;\vert\;\pi ^1 _\# \gamma=\mu,\:\pi^2 _\# \gamma =\nu\right\}.
\end{equation} 
The admissible measures $\gamma$
for the Kantorovich problem are called transport plans. We denote by $\Pi(\mu,\nu)$ the set of transport plans with marginals $\mu$ and $\nu$.
If $t$ is admissible for the Monge problem then the measure associated
in the usual way to the graph of $t$, i.e. $\gamma=(I_X\times t)_ \# \mu$,  is admissible for the Kantorovich
problem. However the class of admissible measures for the Kantorovich
problem is never empty as it contains $\mu \otimes \nu$. Moreover, the
Kantorovich problem is a linear one. 
Existence of minimizers for the Monge problem is difficult and may
fails, while for the Kantorovich problem the semicontinuity of $c$ is
enough to ensure existence of minimizers.

%If $X=Y=M$ and $c$ is a distance,  then the cost$$d_c(\mu,\nu)= \min \left \{ \int _{M \times M}c(x,y) d\gamma(x,y) \ : \ \pi ^1 _\sharp \gamma= \mu,\\pi^2 _\sharp \gamma =\nu \right \} $$defines a distance on $\P(M)$.
If $X=Y$, and $c=d$ is  the distance function,  for $ p\geq1$ the cost 
$$W_p(\mu,\nu)= \left ( \min%_{\stackrel{\pi ^1 _\sharp \gamma=\mu}{\pi^2 _\sharp \gamma =\nu}}
\left \{ \int _{X \times X}d^p(x,y)\ d
\gamma(x,y) \ : \  \gamma \in  \Pi(\mu,\nu) \right \} \right )^{1/p} $$
defines a distance on $\P(M)$ called $p$-Wasserstein distance.

%If $\gamma = \mu \otimes \nu _x$, then, by Theorem \ref{disintegration} the map $x\mapsto \beta (\nu _x)$ is measurable. One can define a generalized pointwise expansion and  compression  energy through the pointwise Lipschitz constant of the map $x\mapsto \varphi _u(x):=\beta (\nu _x)$. Observe thatfor a transport map $u$, since $\beta (\delta _x)=x$,  we have $$r_{\varphi  _u}(x_0)=r_u(x_0).$$However, it may happen that the map $\varphi_u$ isan isometry although the target are  quite far from being \emph{isometric} as  in Figure \ref{fig:baricenter}}.\end{remark}

%%%%%%%%%%%%%%%%%%%%%%%%%%%%%%%%%%%%%%%%%%%%%%%%%%%%%%%%%%%%%%%%%%%%%%%%%%%%%%%%%%%%%%%%%%%%%%%%%%%%%%%%%%%%%%%5
%~~~~~~~~~~~~~~~~~~~~

%\begin{definition}\label{wd} Let $\mu, \nu \in \P(X)$, the $1$-Wasserstein distance between $\mu$ and $\nu$ is defined  by\begin{equation}W(\mu,\nu )=\inf_{\gamma\in\Pi(\mu,\nu)} \int_X d(x,y)  d\gamma(x,y). \end{equation}\end{definition}
Let us recall that by Kantorovich duality (see \cite{a1,granieri, v,v2}) the $1$-Wasserstein distance between $\mu$ and $\nu$, which we will simply denote by $W$,  can be expressed as follows\begin{equation}\label{wd1}W(\mu,\nu)=\sup\left\{\int_X\varphi\;d(\mu-\nu)\;\vert\; \varphi\in\lip_1(X)\right\},\end{equation} where $\lip_1(X)$ denotes the set of Lipschitz function having Lipschitz constant not greater than one.
%%%%%%%%%%%%%%%%%%%%%%%%%%%%%%%%%%%%%%%%%%%%%%%%
\subsection*{Description of the results}
A relevant  tool in mass transportation theory  is constituted by the Disintegration Theorem (Theorem~\ref{disintegration}) of measures which states that 
 every transport plan $\gamma \in \P(X\times Y)$  can be written as 
 $\gamma =f(x)\otimes \mu$ where $f(x)\in \P(Y)$. 
 We shall call {\em disintegration map} every $f:X\rightarrow (\P(Y), W)$ such that
$$f(x)\otimes \mu \in \Pi (\mu,\nu).$$

In this paper we relate the structure of the set of transport plans $\Pi(\mu,\nu)$ with the push-forward of disintegration maps. Indeed, given the measure $\gamma=f(x)\otimes \mu$, obviously $\mu$ is the first marginal of $\gamma$, 
 while the second marginal depends on the disintegration map $f$. 
 An interesting feature of transport plans appears  by looking to the measure 
 $f_\# \mu $. Precisely, if  $\eta=g(x)\otimes \mu$ is another transport plan, 
 it results  (see Lemma \ref{push-lemma}):
$$ f_\# \mu = g_\# \mu\Rightarrow \pi^2_\# \gamma =\pi^2_\# \eta .$$ Therefore, the second marginals can be fixed by looking to the push-forward of disintegration maps. In this way the set of transport plans $\Pi(\mu,\nu)$ can be structured in transport classes  (see Definition \ref{class}) by setting $\eta \in [\gamma] \Leftrightarrow f_\# \mu = g_\# \mu$. Roughly speaking 
(see Example~\ref{discrete-class}), fixing a transport class leads  to consider a constrained transport problem with respect to splitting masses or traveling ones.  
  Lemma~\ref{push-metric} shows that all transport plans induced by transport maps belong to the same transport class. Moreover, by density of transport maps in $\Pi(\mu,\nu )$, it follows that  (see Proposition~\ref{prop-class}) such transport class characterizes the transport plans induced by transport maps.\\ Therefore, in this perspective the Monge problem can be seen as a constrained Kantorovich problem, namely 
$${\rm min} \left \{\int_X c(x,t(x))\ d\mu \ :\  t_\#\mu = \nu\right \} ={\rm min}\left \{\int_{X\times Y}c(x,y)\ d \gamma \ : \ \gamma \in [\delta_s\otimes \mu] \right \},$$ 
for a given transport map $s$.  By density of transport maps, the Kantorovich transport problem also  corresponds to 
$${\rm min} \left \{\int_{X\times Y} c(x,y)\  d\gamma  \ :\  \gamma \in \Pi (\mu, \nu)\right \} =  
{\rm min}\left \{\int_{X\times Y}c(x,y)\ d \gamma \ : \ \gamma \in \overline{[\delta_s\otimes \mu]}^W \right \},$$ 
for a given  transport map $s$.\\
Hence  the Monge   problem represents  a particular case of a more rich structure of problems, obtained by fixing a transport class in the Kantorovich formulation of transport problems. In this context, considering transport problems in a fixed transport class is quite natural. 
Since we are looking to the push-forward trough disintegration maps, fixing a transport class results equivalent (see Section $2$) to consider measures $\Lambda \in \P(\P(Y))$ satisfying the \emph{barycenter} constraint 
$$\int_{\P(Y)} \lambda \ d\Lambda (\lambda) = \nu .$$ The corresponding transport class is given by the transport plans $f(x)\otimes \mu$ such that $f_\#\mu = \Lambda$. 
In  this formulation  we see that fixing a transport class is equivalent to consider transport maps $f$ sending $\mu$ into $\Lambda$.
So it is  natural to consider the following Monge-Kantorovich problem in the class $\Lambda$:  
\begin{equation}
  MK_\Lambda(c,\mu,\nu):= \inf _\gamma  \left \{ \int_{X\times Y}c(x,y)\ d\gamma\;\vert\;  \gamma =f\otimes \mu,\;  f_\#\mu=\Lambda \right \}.
 \end{equation} 
The above transport problem leads  to consider an abstract Monge problem between the space $X$ and $\P(Y)$. 
Let us consider the following transport cost  
$$ \forall (x,\lambda) \in X\times \P(Y) \ : \ \tilde c(x,\lambda) = \int_Y c(x,y)\ d\lambda .$$
We set
\begin{equation}
M(\tilde c, \mu, \Lambda):=\inf_{f}\left\{\int_X\tilde c(x,f(x))\ d\mu\;\vert\;f_\#\mu=\Lambda  \right\}.
\end{equation}
For every transport class $\Lambda$ (see Proposition~\ref{MKLambda})
 we have
$$ M(\tilde c, \mu, \Lambda)=MK_\Lambda(c,\mu,\nu).$$
Therefore, every existence result for the Monge problem $M(\tilde c, \mu, \Lambda)$ in the abstract setting corresponds to an existence result for the Monge-Kantorovich problem in the transport class $\Lambda$. Of course, minimizing in a transport class could be as difficult as for the Monge problem since, of course, the transport classes are not in general closed. However, by this reformulation it comes out that the Monge case is peculiar. More precisely, since the Monge problem is a particular case of transportation in a transport class, one may asks what happens for others transport classes.
In other words, the matter consists in establishing if the abstract Monge problem admits solutions. The existence results for the Monge problem are usually stated in the following form: under some assumption on the spaces, on the first marginal $\mu$ and on the cost $c(x,y)$,  for every second marginal $\nu$ the Monge problem admits solutions. For the abstract Monge problem $ M(\tilde c, \mu, \Lambda)$ this is not the case. For discrete measures $\Lambda$, see Section $4$,  for the quadratic cost it results that $ M(\tilde c, \mu, \Lambda)$ may not admit solutions. Therefore, in the abstract setting, it could be also interesting to consider, under some assumption on the spaces, on the first marginal $\mu$ and on the cost $c(x,y)$,  the question for what kind of second marginals the corresponding Monge problem admits solutions. From this point of view, in some sense the Monge problem is a \emph{lucky} case.
%%%%%%%%%%%%%%%%%%%%%%%%%%%%%%%%%%%%%%%%%%%%%%%%%%%%%%%%%%%%

%~~~~~~~~~~~~~~~~~~~~

\section{Disintegration maps and transport classes}
Let $X,Y\subset \R^N$ be two compact sets and 
let $\M(Y)$ be the space of Radon measures on $Y$. 
A map $\lambda :X\rightarrow \M (Y)$ is said to be Borel, or equivalently weakly*-measurable,  if for any open set 
$B\subset Y$ the function $x\in X \mapsto \lambda_x (B)$ is a real valued Borel map.
Equivalently, $x\mapsto \lambda_x$ is a Borel map if, for any Borel and bounded map $ \varphi :
X\times Y \rightarrow \R$, it results that the map $$ x \in X
\mapsto \int_Y \varphi (x,y) d\lambda _x $$ is Borel.
%\end{definition}
\begin{theorem}[Disintegration theorem]
\label{disintegration} Let $\gamma  \in \P (X\times Y ) $ be given and let
$ \pi^1 : X\times Y  \rightarrow X$ be the first projection map of $X\times Y$, we set $\mu = (\pi^1)_ \# \gamma$.  Then for $\mu-a.e.\ x \in X $ there exists $\nu _x \in \P
(Y)$   such that
\begin{itemize}
\item[\rm (i)] the map $x \mapsto \nu _x$ is Borel,\\
\item[\rm (ii)]$ \forall \varphi \in \C _b (X\times Y ) : \, \int _{X\times Y} \varphi (x,y) d\gamma
= \int _X \left ( \int _Y \varphi (x,y) d\nu _x(y)
\right )d\mu (x) .$
\end{itemize}
Moreover the measures $\nu _x$ are uniquely determined up to a negligible set with respect to  $\mu$. 
\end{theorem}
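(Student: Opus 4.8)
The plan is to build the family $\{\nu_x\}$ by a standard two-step procedure: first construct the conditional measures on a countable generating algebra of sets, then use a density/monotone-class argument to upgrade to all of $\C_b(X\times Y)$. First I would fix a countable collection $\{\varphi_n\}\subset \C_b(Y)$ that is dense (in the sup norm, or at least separating) in $\C(Y)$, which is possible since $Y\subset\R^N$ is compact and hence $\C(Y)$ is separable. For each $n$, consider the signed measure $A\mapsto \int_{A\times Y}\varphi_n(y)\,d\gamma(x,y)$ on the Borel sets of $X$; this is absolutely continuous with respect to $\mu=(\pi^1)_\#\gamma$ (indeed $|\cdot|\le \|\varphi_n\|_\infty\,\mu(A)$), so by Radon--Nikodym there is a Borel function $g_n:X\to\R$ with $\int_{A\times Y}\varphi_n\,d\gamma=\int_A g_n\,d\mu$ for every Borel $A\subset X$. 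The function $g_n(x)$ is the candidate for $\int_Y \varphi_n\,d\nu_x$.

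Next I would verify that, off a $\mu$-null set, the assignment $\varphi_n\mapsto g_n(x)$ extends to a bounded positive linear functional on $\C(Y)$. The linearity and the bound $|g_n(x)|\le\|\varphi_n\|_\infty$ hold for $\mu$-a.e.\ $x$ for each fixed rational linear combination, hence simultaneously for all of them on a single $\mu$-conull set $X_0$ (a countable intersection); positivity $g_n(x)\ge 0$ when $\varphi_n\ge 0$ is obtained the same way by testing against indicators of the set $\{g_n<0\}$. On $X_0$ the functional $\varphi_n\mapsto g_n(x)$ is uniformly continuous for the sup norm, so it extends uniquely to all of $\C(Y)$, and by the Riesz representation theorem there is a unique $\nu_x\in\P(Y)$ with $\int_Y\varphi\,d\nu_x$ equal to that extension (total mass $1$ follows by taking $\varphi\equiv 1$). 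For $x\notin X_0$ set $\nu_x$ to be any fixed probability measure. Borel measurability of $x\mapsto\nu_x$ in (i) then reduces to Borel measurability of $x\mapsto \int_Y\varphi\,d\nu_x$ for each $\varphi\in\C(Y)$, which holds because this is a sup-norm limit of the Borel functions $g_n$; the stated equivalent formulation for bounded Borel $\varphi$ follows by a monotone-class argument.

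To get identity (ii) for a general $\varphi\in\C_b(X\times Y)$, I would first treat the product case $\varphi(x,y)=h(x)\psi(y)$ with $h\in\C(X)$, $\psi\in\C(Y)$: this is exactly the Radon--Nikodym identity $\int h\cdot(\int_Y\psi\,d\nu_x)\,d\mu=\int_{X\times Y}h(x)\psi(y)\,d\gamma$ tested against $h\,d\mu$. Linear combinations of such products are dense in $\C(X\times Y)$ by Stone--Weierstrass (using compactness of $X\times Y$), and both sides of (ii) are continuous under uniform convergence with the common bound $\|\varphi\|_\infty$, so (ii) passes to all continuous $\varphi$, and then to bounded Borel $\varphi$ by monotone class. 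Finally, uniqueness: if $\{\nu_x\}$ and $\{\nu_x'\}$ both satisfy (ii), then for each $\varphi_n$ the functions $x\mapsto\int_Y\varphi_n\,d\nu_x$ and $x\mapsto\int_Y\varphi_n\,d\nu_x'$ are both Radon--Nikodym derivatives of the same measure, hence agree $\mu$-a.e.; intersecting the countably many null sets and using density of $\{\varphi_n\}$ gives $\nu_x=\nu_x'$ for $\mu$-a.e.\ $x$.

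The main obstacle is the measurable-selection issue hidden in the second paragraph: passing from "for each fixed test function, a property holds $\mu$-a.e." to "there is a single $\mu$-conull set on which $x\mapsto g_n(x)$ simultaneously defines a genuine positive linear functional of norm one." Separability of $\C(Y)$ is precisely what makes this work, by reducing the uncountably many conditions to countably many; this is the step to state carefully rather than the routine passages to the limit.
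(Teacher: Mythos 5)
Your outline is correct: this is the standard construction of the disintegration via Radon--Nikodym derivatives of the measures $A\mapsto\int_{A\times Y}\varphi_n\,d\gamma$, separability of $\C(Y)$ to collect the countably many exceptional null sets, Riesz representation to recover $\nu_x$, and Stone--Weierstrass plus a monotone-class argument to pass to all of $\C_b(X\times Y)$; you also correctly identify the one delicate point (obtaining a single conull set on which the functional is simultaneously linear, positive, and of norm one). Note that the paper itself offers no proof of this theorem --- it quotes it as a classical tool --- so there is nothing to compare against; your argument is the standard textbook one (as in Ambrosio's lecture notes or Ambrosio--Gigli--Savar\'e) and is sound as written.
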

Let $\gamma\in\Pi(\mu,\nu)$,
as usual we will  write $ \gamma =  \nu _x\otimes \mu $, 
 assuming that $ \nu _x $
satisfy the condition (i) and (ii) of Theorem~\ref{disintegration}. Obviously, the transport plan $\mu \otimes \nu$ corresponds to the constant map $x\mapsto\nu_x=\nu$. 
Let $t: X\rightarrow Y$, be a transport map, 
observe that for the  transport plan $\gamma _t:= (I\times t)_ \# \mu$, the  Disintegration Theorem yields $\gamma _t=  \delta _{t(x)}\otimes \mu $.
Therefore, the disintegration procedure for a transport plan $\gamma = f(x)\otimes \mu$ produces  a map
\begin{equation}
f:X\rightarrow (\P(Y),W),\:\:\hbox{s.t.}\:\:  x\mapsto f(x)\:\: \hbox{is Borel}.
\end{equation}
We shall refer to such maps $f$  as {\em disintegration maps}. For a transport map $t$ the corresponding  disintegration map is given by $x\mapsto \delta_{t(x)}$.
Of course, it is possible to look at a disintegration map as a measurable map between $X$ and $(\P(Y),W)$. Indeed, we have the following result.
\begin{lemma}\label{measurable}
A map $f:X\rightarrow (\P(Y),W)$ is a disintegration map if and only if $f$ is measurable.
\end{lemma}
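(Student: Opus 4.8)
The plan is to show that the weak*-measurability of $f$ (the property built into the definition of a disintegration map via the Disintegration Theorem~\ref{disintegration}) is the same as Borel measurability of $f$ viewed as a map into the metric space $(\P(Y),W)$. Throughout I would use the standard fact that, since $Y\subset\R^N$ is compact, $(\P(Y),W)$ is a compact — hence separable and complete — metric space whose topology is exactly the weak* (narrow) topology on $\P(Y)$; this is recorded in the cited monographs.

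For the implication ``disintegration map $\Rightarrow$ measurable'', since $(\P(Y),W)$ is separable it is enough to prove that $x\mapsto W(f(x),\lambda)$ is a Borel function for each fixed $\lambda\in\P(Y)$, because then $f^{-1}$ of every open ball, and hence of every open set, is Borel. To see this, apply the duality formula \eqref{wd1} on $Y$: fixing a basepoint $y_0\in Y$ and using that $f(x)$ and $\lambda$ are probability measures (so constants integrate to zero against $f(x)-\lambda$), one gets $W(f(x),\lambda)=\sup\{\int_Y\varphi\,d(f(x)-\lambda):\varphi\in\lip_1(Y),\ \varphi(y_0)=0\}$. By Arzel\`a--Ascoli this family of test functions is a compact, hence separable, subset of $C(Y)$, so the supremum may be taken over a countable dense subfamily $\{\varphi_n\}$. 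Each $x\mapsto\int_Y\varphi_n\,df(x)$ is Borel by weak*-measurability, $\int_Y\varphi_n\,d\lambda$ is a constant, and a countable supremum of Borel functions is Borel; this gives the claim.

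For the converse ``measurable $\Rightarrow$ disintegration map'', I must check that $x\mapsto\int_Y\varphi(x,y)\,df(x)(y)$ is Borel for every bounded Borel $\varphi:X\times Y\to\R$. For products $\varphi(x,y)=u(x)v(y)$ with $u\in C(X)$ and $v\in C(Y)$ this is immediate: $\lambda\mapsto\int_Y v\,d\lambda$ is continuous on $(\P(Y),W)$, so its composition with the Borel map $f$ is Borel, and multiplying by $u(x)$ preserves Borel measurability. I would then upgrade to all bounded Borel $\varphi$ by the functional monotone class theorem, since such products form a multiplicative family generating $\mathcal B(X\times Y)$ (here $X$ and $Y$ are compact metric), and the class of admissible integrands is a vector space containing constants and stable under uniform and under bounded monotone pointwise limits (use monotone convergence for the inner integral and that pointwise limits of Borel functions are Borel). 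Once $f$ is known to be weak*-measurable, $\gamma=f(x)\otimes\mu$ is a genuine element of $\P(X\times Y)$ with first marginal $\mu$, so $f$ is a disintegration map for the second marginal $\nu=\pi^2_\#\gamma$.

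The step I expect to be the main obstacle is the converse direction: passing from continuous test functions in the $y$ variable to arbitrary bounded Borel integrands $\varphi(x,y)$ that are only jointly measurable on $X\times Y$ — this is precisely where the functional monotone class argument is needed rather than a naive approximation, and it must be combined with the compatibility of the $W$-topology with weak convergence. The forward direction is essentially routine, its only genuine ingredient being the separability of the unit ball of $\lip(Y)$ modulo constants, which follows from compactness of $Y$ via Arzel\`a--Ascoli.
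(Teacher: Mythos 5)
Your proof is correct and follows essentially the same route as the paper: one direction uses the (semi)continuity of $\lambda\mapsto\int_Y\varphi\,d\lambda$ on $(\P(Y),W)$ composed with $f$, and the other uses Kantorovich duality together with Arzel\`a--Ascoli to reduce the supremum defining $W$ to a countable family of Borel functions of $x$. The only differences are matters of care, in your favor: you normalize $\lip_1(Y)$ at a basepoint before invoking compactness, and you prove the bounded-Borel-integrand form of weak*-measurability via a monotone class argument, whereas the paper checks only the open-set form and relies on the equivalence it states without proof.
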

\begin{proof}
Let $f:X\rightarrow (\P(Y),W)$ be measurable and let  $A\subset Y$ be an open set. Observe that $f(x)(A)=\int _Y \chi_A(y) df(x)$. For a l.s.c. function $\varphi$ over $Y$, define
 
\begin{equation}\label{valutation}I_\varphi : (\P(Y),W) \rightarrow \R, \quad  I_\varphi(\lambda):=\int_{\P(Y)}\varphi (y) d\lambda .\end{equation}
Since $W$ metrizes the weak* topology of measures, we have that $I_\varphi$ is a l.s.c. map.
For every $x\in X$ it results 
$$\int_Y\varphi (y) df(x)= I_\varphi(f(x)).$$
If $f$ is a measurable map, it follows that the map $f(\cdot)(A):X\rightarrow \R $ is Borel as composition of a l.s.c map and a measurable one. Hence $f$ is a disintegration map.
Vice versa, observe that by the Ascoli-Arzel\'{a} Theorem the space $\lip_1(Y)$ is compact with respect to the uniform convergence. Fixed a countable dense subset $D\subset \lip_1(Y)$, by Kantorovich duality we have
$$ W(\nu_1,\nu_2)=\sup_{u\in \lip_1(Y)}\int_Y u\ d(\nu_1-\nu_2)=\sup_{u\in D}\int_Y u\ d(\nu_1-\nu_2).$$
Since $x\mapsto f(x)$ is Borel, we have that, for every $u\in  \lip_1(Y)$,
  $g:X\rightarrow \R$ defined by $g_u(x):= \int_Y u\ d(\nu-f(x))$ is a Borel map .
To check that $f$ is a measurable map, it is sufficient to observe that $$f^{-1}(B(\nu,r))= \bigcap _{u\in D}g_u^{-1}(]-r,r[):=A.$$
Indeed, if $x\in A$ we get $|g_u(x)|<r$, for every $u\in D$. Hence, by definition of $g_u$, it follows that $W(\nu,f(x))<r$ and then $f(x)\in B(\nu,r)$.
On the other hand, if $f(x)\in B(\nu,r)$, i.e. $W(\nu,f(x))<r$, by Kantorovich duality $|g_u(x)|:= |\int_Y u\ d(\nu-f(x))|<r$ for every $u\in D$. This implies $x\in A$. 
\end{proof}
%%%%%%%%%%%%%%%%%%%%%%%%%%%%%%%%%%%%%%%%%%%%%%%%%%%%%%%%
 Let  $X\subset \R ^N$,  we recall that the  barycenter of a measure $\mu\in \P(X)$ is given by 
$$\beta (\mu )= \int _{X}x\ d\mu .$$
 %~~~~~~~~~~~~~~~~~~~~

  Disintegration maps naturally produce   measures of the form  $f_\# \mu$ on  the space $(\P(Y),W)$. By the following lemma,  we see that this point of view is equivalent to fix the second marginal  of transport plans induced by transport maps.

 %~~~~~~~~~~~~~~~~~~~~

\begin{lemma}
\label{push-metric}
Let $t,s:X\rightarrow Y$ be two  given Borel maps, $\mu\in\P(X)$ and let $f,g:X\rightarrow \P(Y)$ defined by    $f(x)=\delta_{t(x)}$, $g(x)=\delta_{s(x)}$.  Then
 \begin{equation}
   t_\# \mu = s_\# \mu\:\Leftrightarrow\: f_\# \mu = g_\# \mu.
\end{equation}
\end{lemma}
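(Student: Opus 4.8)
The plan is to factor $f$ and $g$ through the canonical embedding of $Y$ into its Wasserstein space. Let $\iota\colon Y\to(\P(Y),W)$ be given by $\iota(y)=\delta_y$, so that $f=\iota\circ t$ and $g=\iota\circ s$. The first step is to observe that $\iota$ is a homeomorphism of $Y$ onto its image: it is injective, and it is continuous because $y_n\to y$ implies $\delta_{y_n}\rightharpoonup\delta_y$ weakly*, hence $W(\delta_{y_n},\delta_y)\to 0$ since $W$ metrizes the weak* topology on $\P(Y)$ (here compactness of $Y$ is used); being a continuous injection from the compact space $Y$ into the Hausdorff space $(\P(Y),W)$, it carries $Y$ homeomorphically onto the compact set $\iota(Y)=\{\delta_y:y\in Y\}$, and in particular $\iota^{-1}\colon\iota(Y)\to Y$ is continuous, hence Borel. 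Consequently $f$ and $g$ are Borel (compositions of Borel maps with the continuous map $\iota$), and by functoriality of the push-forward $f_\#\mu=\iota_\#(t_\#\mu)$ and $g_\#\mu=\iota_\#(s_\#\mu)$.

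Given these identities the implication $t_\#\mu=s_\#\mu\Rightarrow f_\#\mu=g_\#\mu$ is immediate. For the converse, suppose $\iota_\#(t_\#\mu)=\iota_\#(s_\#\mu)$; both measures are concentrated on $\iota(Y)$, where $\iota^{-1}$ is a Borel left inverse of $\iota$, so applying $(\iota^{-1})_\#$ to both sides and using $(\iota^{-1})_\#\iota_\#(t_\#\mu)=(\iota^{-1}\circ\iota)_\#(t_\#\mu)=(\mathrm{id}_Y)_\#(t_\#\mu)=t_\#\mu$ (and likewise for $s$) yields $t_\#\mu=s_\#\mu$.

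For the direction $(\Leftarrow)$ one may alternatively argue with test functions, which sidesteps the homeomorphism property: for $\varphi\in\C_b(Y)$ the map $I_\varphi$ of \eqref{valutation} is continuous on $(\P(Y),W)$ and $I_\varphi(\delta_y)=\varphi(y)$, so that $\int_{\P(Y)}I_\varphi\,d(f_\#\mu)=\int_X\varphi(t(x))\,d\mu=\int_Y\varphi\,d(t_\#\mu)$, and similarly for $g$ and $s$; equality of $f_\#\mu$ and $g_\#\mu$ then forces $\int_Y\varphi\,d(t_\#\mu)=\int_Y\varphi\,d(s_\#\mu)$ for all such $\varphi$, hence $t_\#\mu=s_\#\mu$. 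Note, however, that the direction $(\Rightarrow)$ genuinely needs the embedding remark, since a generic continuous function on $\P(Y)$ is not of the form $I_\varphi$. The only delicate point in the whole argument is the Borel measurability of $\iota^{-1}$ on $\iota(Y)$ — equivalently, that $\iota$ is a Borel isomorphism onto its range — and this is exactly where compactness of $Y$ is exploited; the rest is routine bookkeeping with push-forwards.
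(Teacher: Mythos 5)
Your proof is correct and is essentially the paper's argument repackaged: the forward direction via functoriality of the push-forward under the embedding $\iota(y)=\delta_y$ is exactly the paper's computation with $\varphi(y)=\psi(\delta_y)$ for $\psi\in\C((\P(Y),W))$ written abstractly, and your ``alternative'' test-function argument for the converse (testing against $I_\varphi$) is precisely the paper's proof of that direction. Your primary converse argument, applying $(\iota^{-1})_\#$ on the compact image $\iota(Y)$, is a sound minor variant; the only small correction to your commentary is that the forward direction needs just continuity of $\iota$, the full homeomorphism-onto-image property being genuinely used only in that inversion step.
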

\begin{proof}
Assume  $f_\# \mu = g_\# \mu$ and  for any  $\varphi \in C(Y)$ let us consider  the  function $I_\varphi$ defined in \eqref{valutation}.
%$$\psi:\P(Y)\rightarrow\R, \:\:\:\psi (\lambda)=\int_Y \varphi (y)  d \lambda.$$ 
Observe that $I_\varphi \in C ((\P(Y),W))$. Hence we have 
$$ \int_{\P(Y)} I_\varphi(\lambda) \ d(f_\# \mu)=\int_{\P(Y)} I_\varphi(\lambda) \ d(g_\# \mu) \Leftrightarrow \int _XI_\varphi(f(x))\ d\mu = \int _X I_\varphi (g(x))\ d \mu$$
\begin{equation}\label{marginal} \Leftrightarrow \int_X \left ( \int _Y \varphi (y)\ df(x) \right )  d\mu = \int_X \left ( \int _Y \varphi (y)\ dg(x) \right )  d\mu.\end{equation}
Since
$$ \int_Y\varphi(y)\ df(x)=\varphi(t(x)),\:\:\:\:\int_Y\varphi(y)\ dg(x)=\varphi(s(x)),$$
bi \eqref{marginal} we get 
$$\int_X\varphi(t(x))\ d\mu=\int_X\varphi(s(x))\ d\mu .$$
By the arbitrariness of $\varphi$ we infer  $t_\#\mu=s_\#\mu$.

Vice-versa, for every  $\psi \in C((\P(Y),W))$ let us consider the function $\varphi(y)=\psi(\delta_y)$. Observe that $\varphi \in C(Y)$. 
If  $t_\# \mu = s_\# \mu$ we compute
$$ \int_Y \psi \ d(f_\# \mu)= \int_X \psi(f(x))\ d\mu =\int_X\psi (\delta_{t(x)})\ d\mu =$$
$$ \int_X \varphi (t(x))\ d\mu = \int_X \varphi (s(x))\ d\mu =
\int_Y \psi\  d(g_\# \mu).$$
By the arbitrariness of $\psi $ we obtain  $f_\# \mu = g_\# \mu$.
\end{proof}

\begin{corollary}
Let $t,s:X\rightarrow Y$ be two  given Borel maps, $\mu\in\P(X)$,   let $f,g:X\rightarrow \P(Y)$ defined by    $f(x)=\delta_{t(x)}$, $g(x)=\delta_{s(x)}$ and let $\gamma = f(x)\otimes \mu$, 
$\eta =  g(x)\otimes \mu$. Then 
\begin{equation} 
\pi^2_\# \gamma =\pi^2_\# \eta \Leftrightarrow f_\# \mu = g_\#\mu.
\end{equation}
\end{corollary}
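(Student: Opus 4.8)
The plan is to reduce the statement to Lemma~\ref{push-metric} by identifying the second marginals $\pi^2_\#\gamma$ and $\pi^2_\#\eta$ with the push-forwards $t_\#\mu$ and $s_\#\mu$. Once this identification is made, the asserted equivalence is just the conjunction of a triviality with Lemma~\ref{push-metric}, so there is essentially no genuine obstacle here; the only point requiring a line of computation is the identification of the second marginal of a plan induced by a map.

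First I would recall, as observed right after Theorem~\ref{disintegration}, that since $f(x)=\delta_{t(x)}$ the plan $\gamma=f(x)\otimes\mu$ is precisely the plan induced by the map $t$, i.e. $\gamma=\gamma_t=(I_X\times t)_\#\mu$, and similarly $\eta=(I_X\times s)_\#\mu$. Then, for every $\varphi\in C(Y)$,
$$\int_Y\varphi\ d(\pi^2_\#\gamma)=\int_{X\times Y}\varphi(y)\ d\gamma=\int_X\varphi(t(x))\ d\mu=\int_Y\varphi\ d(t_\#\mu),$$
where the middle equality uses $\gamma=(I_X\times t)_\#\mu$ and $\pi^2\circ(I_X\times t)=t$ (equivalently, one may integrate the disintegration $\delta_{t(x)}\otimes\mu$ directly, since $\int_Y\varphi\ d\delta_{t(x)}=\varphi(t(x))$). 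By arbitrariness of $\varphi\in C(Y)$ this gives $\pi^2_\#\gamma=t_\#\mu$, and the same argument yields $\pi^2_\#\eta=s_\#\mu$.

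Combining these two identities we get $\pi^2_\#\gamma=\pi^2_\#\eta$ if and only if $t_\#\mu=s_\#\mu$. On the other hand, Lemma~\ref{push-metric} asserts exactly that $t_\#\mu=s_\#\mu$ if and only if $f_\#\mu=g_\#\mu$. Chaining the two equivalences gives $\pi^2_\#\gamma=\pi^2_\#\eta\Leftrightarrow f_\#\mu=g_\#\mu$, which is the claim. The main (and only) thing to be careful about is that the identification $\pi^2_\#\gamma=t_\#\mu$ be stated for plans arising from maps via the disintegration $\delta_{t(x)}\otimes\mu$; everything else is formal.
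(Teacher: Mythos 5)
Your argument is correct and is exactly the route the paper intends: the corollary is stated without proof as an immediate consequence of Lemma~\ref{push-metric}, once one notes the standard identification $\pi^2_\#(\delta_{t(x)}\otimes\mu)=t_\#\mu$, which you verify correctly by testing against $\varphi\in C(Y)$.
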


Observe that the first part of the proof of the above Lemma works for general transport plans $\gamma = f(x)\otimes \mu$, 
$\eta =  g(x)\otimes \mu$. Actually, by \eqref{marginal} we get  the following

\begin{lemma}\label{push-lemma}
Let $\mu\in\P(X)$,    $f,g:X\rightarrow \P(Y)$,  $\gamma =f(x)\otimes \mu $, $\eta =  g(x)\otimes \mu $ be given. Then the following implication holds true
  \begin{equation}
 \label{push-implication}
   f_\# \mu = g_\# \mu\Rightarrow \pi^2_\# \gamma =\pi^2_\# \eta .
\end{equation}
\end{lemma}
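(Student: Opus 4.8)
The remark preceding the statement already points to the route: the first half of the proof of Lemma~\ref{push-metric} never used the special form $f(x)=\delta_{t(x)}$, $g(x)=\delta_{s(x)}$, so it applies verbatim to arbitrary disintegration maps and delivers exactly the identity \eqref{marginal}. The plan is therefore to (i) re-derive \eqref{marginal} from the hypothesis $f_\#\mu=g_\#\mu$, and then (ii) recognise that \eqref{marginal} is nothing but the equality of the second marginals tested against $C(Y)$.

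For step (i): $f_\#\mu$ and $g_\#\mu$ are Borel probability measures on the metric space $(\P(Y),W)$, hence they coincide iff $\int_{\P(Y)}\psi\,d(f_\#\mu)=\int_{\P(Y)}\psi\,d(g_\#\mu)$ for every $\psi\in C((\P(Y),W))$, equivalently (change of variables for the push-forward) iff $\int_X\psi(f(x))\,d\mu=\int_X\psi(g(x))\,d\mu$. Fix $\varphi\in C(Y)$ and apply this with $\psi=I_\varphi$ as in \eqref{valutation}: since $W$ metrizes the weak* topology on $\P(Y)$ and $Y$ is compact, both $\varphi$ and $-\varphi$ are l.s.c., so $I_\varphi$ and $-I_\varphi$ are both l.s.c., i.e. $I_\varphi\in C((\P(Y),W))$. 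Using $I_\varphi(f(x))=\int_Y\varphi(y)\,df(x)$ and the analogous identity for $g$ yields
$$\int_X\left(\int_Y\varphi(y)\,df(x)\right)d\mu=\int_X\left(\int_Y\varphi(y)\,dg(x)\right)d\mu,$$
which is precisely \eqref{marginal}.

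For step (ii): by the Disintegration Theorem applied to $\gamma=f(x)\otimes\mu$ with the test function $(x,y)\mapsto\varphi(y)$, together with the definition of push-forward,
$$\int_Y\varphi\,d(\pi^2_\#\gamma)=\int_{X\times Y}\varphi(y)\,d\gamma=\int_X\left(\int_Y\varphi(y)\,df(x)\right)d\mu,$$
and likewise $\int_Y\varphi\,d(\pi^2_\#\eta)=\int_X\left(\int_Y\varphi(y)\,dg(x)\right)d\mu$. Combining these with \eqref{marginal} gives $\int_Y\varphi\,d(\pi^2_\#\gamma)=\int_Y\varphi\,d(\pi^2_\#\eta)$ for every $\varphi\in C(Y)$, and since $Y$ is compact a finite Borel measure on $Y$ is determined by its action on $C(Y)$; hence $\pi^2_\#\gamma=\pi^2_\#\eta$, which is \eqref{push-implication}. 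I do not anticipate a genuine obstacle: the lemma is essentially a rereading of \eqref{marginal}. The only points worth a word of care are the continuity (not merely lower semicontinuity, as in Lemma~\ref{measurable}) of $I_\varphi$ for $\varphi\in C(Y)$, for which one uses that $W$ metrizes the weak* topology, and the measurability of $x\mapsto\int_Y\varphi(y)\,df(x)$, which holds because $f$ is a disintegration map, so that every integral written above is well defined.
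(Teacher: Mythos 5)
Your proposal is correct and follows exactly the route the paper intends: it reuses the first half of the proof of Lemma~\ref{push-metric} (which never uses that $f,g$ are Dirac-valued) to obtain \eqref{marginal} from $f_\#\mu=g_\#\mu$ via the continuous test functions $I_\varphi$, and then reads \eqref{marginal} as the equality of the second marginals tested against $C(Y)$. The added care about continuity of $I_\varphi$ and measurability of $x\mapsto\int_Y\varphi\,df(x)$ is a welcome, if minor, tightening of what the paper leaves implicit.
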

Therefore, also for transport plans, the second marginal can be fixed by fixing the push-forward of disintegration maps.

Notice that  in general  the converse of \eqref{push-implication}  is not  true as we show in the next example.
\begin{example}
Let $f:X\rightarrow \P(Y)$ defined  by $f(x)=\nu$ and let 
$\gamma= f(x)\otimes \mu $.  Let $\eta=g(x)\otimes \mu $ where $g(x)=\delta_{t(x)}$ for a  given transport map $t:X\rightarrow Y$ with  $t_\# \mu =\nu$. 
For every $\psi \in \C((\P(Y),W))$ we have 
$$\int_{\P(Y)}\psi \ d(g_\# \mu)=\int_X\psi (\delta_{t(x)})\ d\mu,$$ 
while
$$\int_{\P(Y)}\psi \ d(f_\# \mu)=\psi(\nu).$$ 
For any  $\varphi\in \C(Y)$ let us consider $\psi (\lambda)=\left | \int_Y \varphi (y) d \lambda \right |=|I_\varphi (\lambda)  |$. We compute
$$ \int_X\psi (\delta_{t(x)})\ d\mu =\int_X \left | \int_Y \varphi (y)\ d \delta_{t(x)}\right |\ d\mu = \int_X|\varphi(t(x))|\ d\mu = \int _Y|\varphi (y)|\ d\nu.$$ However, on the other hand $\psi(\nu)=\left | \int_Y \varphi (y)\  d\nu \right |$.
\end{example}
%%%%%%%%%%%%%%%%%%%%%%%%%%%%%%%%%%%%%%%%%%%%%%%%%%%%%%%%%%%%%%%%%%%%%%%%%%%%%%%%%%%%%
The above arguments allow to characterize transport plans through the 
push forward of disintegration maps. We introduce the following notion of transport class.
\begin{definition}[Transport classes]\label{class}
Let $\gamma, \eta\in \Pi(\mu,\nu)$ with $\gamma= f(x)\otimes \mu$, $\eta= g(x)\otimes \mu $ be given. We shall say that $\gamma$ and $\eta$ are equivalent (by disintegration), in symbols $\gamma\approx\eta$, if $f_\#\mu= g_\#\mu$.\\ For any given $\eta\in \Pi(\mu,\nu)$ with $\eta= g(x)\otimes \mu $,  we shall call transport class of $\eta$ the   equivalence class  of  transport plans given by
\begin{equation}
\label{equiv-class}
[\eta]=\{\gamma= f(x)\otimes \mu\;\vert\;f_\#\mu= g_\#\mu\}.
\end{equation}
\end{definition}
 %~~~~~~~~~~~~~~~~~~~~
 Notice that in the case of  discrete first marginal $\mu =\sum_i \alpha_i \delta_{x_i}$, for any disintegration map it is easily seen that 
 $$ f_\#\mu= \sum_i \alpha_i \delta_{f(x_i)}.$$ Therefore, transport classes are fixed by the range of $f$. 
 
 \begin{example}\label{discrete-class}
 Let  $$\mu=\frac{1}{3}\delta_{x_1}+\frac{1}{3}\delta_{x_2}+\frac{1}{3}\delta_{x_3}, \quad \nu =\frac{1}{6}\delta_{y_1}+\frac{5}{6}\delta_{y_2}.$$
 
\begin{figure}[htbp]
	\centering
		\includegraphics{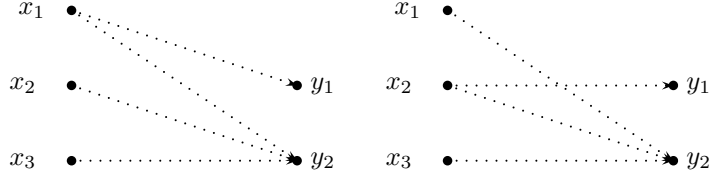}
	\caption{Transport plans in the same class.}
	\label{fig:discrete-class}
\end{figure}

 Consider the transport plan which uniquely splits the mass at $x_1$. This transport plan corresponds to the disintegration map
 \begin{equation}\label{e1}f(x_1)=3(a\delta_{y_1}+b\delta_{y_2}), \quad f(x_2)=\delta_{y_2}, \quad f(x_3)=\delta_{y_2}, \quad a=b=\frac{1}{6}. \end{equation} By changing the point at which the mass is splitted,  the range of the corresponding disintegration map does not change.
 For instance, for the second transport plan in Figure \ref{fig:discrete-class} we get the following disintegration map
 $$ g(x_1)=\delta _{y_2},\quad  g(x_2)= 3(a\delta_{y_1}+b\delta_{y_2}),  \quad g(x_3)=\delta_{y_2}, \quad a=b=\frac{1}{6}.$$ It follows  $f_\# \mu=g_\#\mu$. 
  Analogously, all the transport plans with only one splitted mass  belong to the same transport class. \\
  On the other hand, by changing the number of splitted masses the corresponding disintegration range is changing. 
  
\begin{figure}[htbp]
	\centering
		\includegraphics{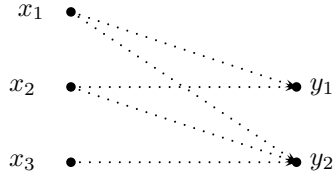}
	\caption{Two splitting masses. }
	\label{fig:discrete-class2}
\end{figure}

  Indeed, by looking at Figure \ref{fig:discrete-class2}, we may  consider  the disintegration map
 $$ %\begin{split}
 h(x_1)=3(a'\delta_{y_1}+b'\delta_{y_2}), \quad h(x_2)=3(c'\delta_{y_1}+d'\delta_{y_2}), \quad h(x_3)=\delta_{y_2}, $$ $$  a'=\frac{3}{30}, \  b'=\frac{7}{30}, \ c'=\frac{2}{30}, \  d'=\frac{8}{30}. %\end{split}
 $$
 In such a case we have that $f_\# \mu \neq h_\#\mu.$ On the other hand, by keeping fixed the number of splitted masses, the transport class may be changed by modifying the amount of traveling masses. Consider for instance the disintegration
  $$ %\begin{split}
 k(x_1)=3(a''\delta_{y_1}+b''\delta_{y_2}), \quad k(x_2)=3(c''\delta_{y_1}+d''\delta_{y_2}), \quad k(x_3)=\delta_{y_2}, $$ $$  a''=\frac{1}{30}, \  b''=\frac{9}{30}, \ c''=\frac{4}{30}, \  d''=\frac{6}{30}. %\end{split}
 $$
 We get $h_\# \mu \neq k_\#\mu.$
 
 Therefore, to fix a transport class leads to consider a {\em constrained transport problem}, with respect to splitting masses or traveling  ones.
 \end{example}

 %~~~~~~~~~~~~~~~~~~~~

 %%%%%%%%%%%%%%%%%%%%%%%%%%%%%%%%%%%%%%%%
By Lemma \ref{push-metric} it follows that all transport plans induced by transport maps belong to the same transport class. Since the transport maps are dense in $\Pi(\mu,\nu)$  we can prove the following result.
%\begin{definition}Let $\gamma, \eta\in \Pi(\mu,\nu)$ with $\gamma=\mu\otimes f(x)$, $\eta=\mu\otimes g(x)$ be given. We shall say that $\gamma$ and$\eta$ are equivalent by disintegration, in symbols $\gamma\approx\eta$, if $f_\#\mu= g_\#\mu$.\\ For any $\eta\in \Pi(\mu,\nu)$ with$\eta=\mu\otimes g(x)$,  the equivalence class of $\eta$ will be denoted by $[\eta]$, i.e.\begin{equation}\label{equiv-class}[\eta]=\{\gamma=\mu\otimes f(x)\;\vert\;f_\#\mu= g_\#\mu\}.\end{equation}\end{definition}

\begin{prop}\label{prop-class}
Let $s:X\rightarrow Y$,  be a transport map, i.e. such that $s_\#\mu=\nu$, with $\mu$ non-atomic,  and let 
$\eta =(I\times s)_\# \mu= \delta _{s(x)}\otimes \mu$.
If $\gamma\in[\eta]$ then there exists a transport map $t:X\rightarrow Y$ such that
$\gamma=\delta _{t(x)}\otimes \mu $, i.e. the transport plan $\gamma$ is induced by a transport map  $t$. In particular, if $\gamma =f(x)\otimes \mu$, it results $t(x)=\beta(f(x))$   $\mu$-a.e..
\end{prop}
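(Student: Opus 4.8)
The plan is to deduce from the relation $f_\#\mu=g_\#\mu$, where $g(x)=\delta_{s(x)}$, that $f(x)$ is itself a Dirac mass for $\mu$-a.e.\ $x$, and then to recover the transport map by taking barycenters. First I would compute $g_\#\mu$ explicitly. Writing $\iota\colon Y\to(\P(Y),W)$, $\iota(y)=\delta_y$, for the Dirac embedding, we have $g=\iota\circ s$, hence $g_\#\mu=\iota_\#(s_\#\mu)=\iota_\#\nu$. The map $\iota$ is continuous and injective and $Y$ is compact, so $\D:=\iota(Y)=\{\delta_y:y\in Y\}$ is a compact, hence Borel, subset of $(\P(Y),W)$, and $g_\#\mu(\D)=\nu(Y)=1$.

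Now take $\gamma\in[\eta]$, say $\gamma=f(x)\otimes\mu$ with $f_\#\mu=g_\#\mu$. Then $\mu\big(f^{-1}(\D)\big)=f_\#\mu(\D)=g_\#\mu(\D)=1$, so, after modifying $f$ on a $\mu$-null set, $f(x)\in\D$ for every $x$; that is, $f(x)=\delta_{t(x)}$ for a uniquely determined $t\colon X\to Y$. Consequently $\gamma=f(x)\otimes\mu=\delta_{t(x)}\otimes\mu$, which, as observed right after Theorem~\ref{disintegration}, is exactly $(I\times t)_\#\mu$; it thus only remains to check that $t$ is Borel and that $t_\#\mu=\nu$. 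For measurability I would use the barycenter: since $\beta(\delta_y)=y$ for every $y\in Y$ and $\beta\colon(\P(Y),W)\to\R^N$ is continuous (recall that $W$ metrizes the weak$^*$ topology and $Y$ is bounded), while $f$ is measurable by Lemma~\ref{measurable}, the map $\beta\circ f$ is Borel and agrees $\mu$-a.e.\ with $t$; this is also the asserted identity $t(x)=\beta(f(x))$ $\mu$-a.e. Finally, by Lemma~\ref{push-lemma}, $t_\#\mu=\pi^2_\#\gamma=\pi^2_\#\eta=s_\#\mu=\nu$, so $t$ is a transport map and $\gamma=\delta_{t(x)}\otimes\mu$ is induced by it.

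The only step that needs a little care is the implication ``$f_\#\mu$ concentrated on Dirac masses $\Rightarrow$ $f(x)$ is a Dirac mass for $\mu$-a.e.\ $x$'': it rests on $\D$ being a Borel subset of $(\P(Y),W)$, which is precisely where compactness of $Y$ is used; everything else is routine bookkeeping with push-forwards. I note that non-atomicity of $\mu$ is not actually needed for this particular argument — it is inherited from the discussion preceding the statement, where it is required for the density of transport maps in $\Pi(\mu,\nu)$.
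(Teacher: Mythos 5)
Your proof is correct, and it takes a genuinely different --- and considerably more elementary --- route than the paper's. The paper argues by approximation: it invokes the density of transport maps in $\Pi(\mu,\nu)$ (this is where non-atomicity of $\mu$ enters), producing maps $t_n$ with $(t_n)_\#\mu=\nu$ and $\delta_{t_n(x)}\otimes\mu\rightharpoonup\gamma$, extracts a uniform $L^2$ bound, identifies the weak $L^2$ limit of $t_n$ as $\beta(f(x))$, upgrades to strong convergence by comparing norms (using that all these plans lie in the same class), and concludes by a stability lemma from Ambrosio's notes. Your argument bypasses all of this: since the disintegration map of $\eta$ is $\iota\circ s$ with $\iota(y)=\delta_y$ an isometric embedding of the compact set $Y$ into $(\P(Y),W)$, the measure $g_\#\mu=\iota_\#\nu$ is concentrated on the compact (hence Borel) set $\Delta$ of Dirac masses, so $f_\#\mu=g_\#\mu$ forces $f(x)\in\Delta$ for $\mu$-a.e.\ $x$; taking barycenters then yields the Borel map $t=\beta\circ f$ with $f(x)=\delta_{t(x)}$ a.e., and $t_\#\mu=\pi^2_\#\gamma=\nu$ by Lemma~\ref{push-lemma}. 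What your route buys: no density theorem, no $L^2$ machinery (hence none of the implicit tie to the quadratic cost that the paper has to remark away afterwards), and, as you correctly observe, no non-atomicity hypothesis --- the proposition is really a purely measure-theoretic statement about the class $[\eta]$. What the paper's route buys is mainly coherence with the surrounding discussion, where the same approximation machinery underlies the identity $\overline{[\delta_s\otimes\mu]}^W=\Pi(\mu,\nu)$; it gives no extra generality for this particular statement. Your one delicate step --- that $\Delta$ is Borel so that ``$f_\#\mu$ concentrated on Dirac masses'' transfers to ``$f(x)$ is a Dirac mass a.e.'' --- is handled correctly via compactness of $Y$.
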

\begin{proof}
By applying \cite[Theorem 9.3]{a1}, see also \cite{gangbo},  and the same argument employed in the proof of  (\cite[Theorem 2.1]{a1}), we find   a sequence of Borel maps $t_n:X\rightarrow Y$ such that 
$$\gamma =\lim_{n\rightarrow + \infty}  \delta _{t_n(x)}\otimes \mu ,\:\:\:(t_n)_\# \mu =\nu\:\:\:\forall n\in \N$$ 
and therefore
 $ \delta _{t_n(x)}\otimes \mu \in [\eta]$,  $\forall n\in\N$.
 %%%%%%%%%%%%%%%%%%%%%%%%%%%%%%%%%%%%%%%%%%%%%%%%
 Consider $\varphi(y)=|y|^2.$ By the push-forward constraint we get
 $$ \int_X|t_n(x)|^2\ d\mu = \int_X\varphi(t_n(x))\ d\mu= \int_Y\varphi(y)\ d\nu=\int _Y|y|^2 d\nu <+\infty .$$
 
 %%%%%%%%%%%%%%%%%%%%%%%%%%%%%%%%%%%%%%%%%%%%%%%%%%%%%%%%%%%
Let us consider  $\psi\in \C((\P(Y),W))$ defined  by  
$\psi (\delta_y)=|y|^2$. In fact, setting $\Delta \subset \P(Y)$ the set of Dirac deltas, the function $\psi$ is Lipschitz, with respect to the Wasserstein distance, over $\Delta$. Hence it suffices to consider any Lipschitz extension of $\psi$  on the whole $\P(Y)$.
For every $n\in \N$, since $(\delta_{t_n})_\#\mu=(\delta_s)_\# \mu $ we have 
\begin{equation}\label{l2} \int_X |t_n(x)|^2 d\mu =  \int _X \psi(\delta_{t_n(x)})\ d\mu = \int _X \psi(\delta_{s(x)})\ d\mu=\int_X|s|^2 d\mu.\end{equation}
%%%%%%%%%%%%%%%%%%%%%%%%%%%%%%%%%%%%%%%%%%%%%%%%%%%%%%%%%%%%%%%%%%%%%%%%%%%%%%%%
Therefore, by passing to a subsequence, we may suppose that   $t_n$ is  weakly convergent and let   $t$ be the weak limit of $t_n$.\\
 Let $\gamma =f(x)\otimes \mu$. By  definition of weak convergence, by approximation with continuous functions,  for any $g\in L^2(X, \R^N)$ 
  we get
$$\int_X \langle g,t\rangle d\mu = \lim_{n\rightarrow + \infty}\int_X\langle g,t_n\rangle d\mu =
\lim_{n\rightarrow + \infty}\int_X \left ( \int_Y \langle g,y\rangle d \delta_{t_n(x)}(y)\right) d\mu =$$ $$\int_X \left ( \int_Y \langle g,y\rangle \ d f(x)\right) d\mu = \int_X \left \langle g, \int_Y y\  df(x)\right \rangle  d\mu= \int_X\langle g, \beta (f(x))\rangle d\mu .$$
Therefore  $t_n\rightharpoonup\beta (f(x))$. 
On the other hand, since  $\gamma \in [\eta ]$, i.e. $f_\#\mu=(\delta_{t_n})_\#\mu$,  and by \eqref{l2} we have
 
$$ \int_X |\beta (f(x))|^2 d\mu= \int_X  |\beta (\delta _{s(x)})|^2 d\mu = \int_X |s|^2 d\mu = \int_X |t_n |^2 d\mu .$$
Then, it follows that $t_n$ strongly converges to $\beta (f(x))$, hence by
\cite[Lemma 9.1]{a1} we deduce that   $\gamma =\delta_{t(x)}\otimes \mu $ with $t(x)=\beta (f(x))$.
\end{proof}

%~~~~~~~~~~~~~~~~~~~~
%%%%%%%%%%%%%%%%%%%%%%%
Of course, the above arguments  hold true  as well by considering $L^p(X, \mu)$
with $p>1$.  This corresponds to consider the transport cost $d^p(x,y)$.\\
 %%%%%%%%%%%%%%%%%%%%%%%%%%%%%%%%%%%%%%
 Proposition~\ref{prop-class} allows to reformulate the  Monge transport  problem  as follows:
\begin{equation}\label{monge-class}{\rm min} \left \{\int_X c(x,t(x))\ d\mu \ :\  t_\#\mu = \nu\right \} = {\rm min}\left \{\int_{X\times Y}c(x,y)\ d \gamma \ : \ \gamma \in [\delta_s\otimes \mu] \right \},\end{equation} for a 
given transport map $s$.\\ 
By density of transport maps, the Kantorovich transport problem can be seen as
\begin{equation}\label{kant-class}{\rm min} \left \{\int_{X\times Y} c(x,y))\ d\gamma  \ :\  \gamma \in \Pi (\mu, \nu)\right \} = {\rm min}\left \{\int_{X\times Y}c(x,y)\ d \gamma \ : \ \gamma \in \overline{[\delta_t\otimes \mu]}^W \right \},\end{equation}for a given transport map $t$.
 %~~~~~~~~~~~~~~~~~~~~
 
 Therefore, the  Monge problem corresponds to minimize  the functional 
 $\int_{X\times Y} c(x,y)\ d\gamma$ in a fixed transport class of $\Pi(\mu,\nu)$, while the Kantorovich one corresponds to  minimize the same functional on the whole $\Pi(\mu,\nu)$.
\section{Monge-Kantorovich problems on transport classes}
In the previous section we have seen that the Monge problem could be seen as a particular case of minimization on a transport class. Since the transport classes are determined through  the push-forward of disintegration maps, they can be assigned by  probability measures $\Lambda $ over $(\P(Y),W)$.\\
 Actually, consider $f\otimes \mu \in \Pi(\mu,\nu)$ and $\Lambda =f_\#\mu$. %Certamente si pu� determinare una classe $[\eta]$ assegnando una misura $\Lambda$ su $\P(Y)$. Poich� 
Since $(\pi_2)_\#( f\otimes \mu)=\nu$, for every $\varphi \in \C(Y)$ we have
%per $\psi(\lambda)=\int_Y\varphi (y) \ d\lambda$
$$\int_Y\varphi(y)\ d\nu = \int_X\left (\int_Y\ \varphi (y) \ df(x)\right ) d\mu = \int_X I_\varphi(f(x))\ d\mu = \int_{\P(Y)}I_\varphi(\lambda ) \ d\Lambda(\lambda)=
$$
$$= \int_{\P(Y)}\left ( \int_Y \varphi (y) \ d\lambda\right ) d\Lambda .$$
Therefore, in order to define a transport class,  the measure $\Lambda $ has to satisfy the constraint  
\begin{equation}\label{lambda-class}\int_{\P(Y)}\lambda \ d\Lambda =\nu .\end{equation}
Hence, every probability measure $\Lambda$ over $(\P(Y),W)$ satisfying \eqref{lambda-class} defines a transport class $[\eta]=\{f\otimes \mu : f_\#\mu =\Lambda \}$.\\ 
For instance, the transport class $[\mu\otimes \nu]$ corresponds to the measure $\Lambda =\delta_\nu$. While for a transport map $t$, the transport class
$[\delta_{t(x)}\otimes \mu]$ corresponds to $\Lambda = \int_X \delta_{\delta_{t(x)}} d\mu$. On the other hand,  the transport class in \eqref{e1} corresponds to the discrete measure $\Lambda = \frac{1}{6}\delta_{\delta_{y_1}}+\frac{5}{6}\delta_{\delta_{y_2}}$. In this perspective, transport plans in the transport class $\Lambda$ can be seen as transport maps between $\mu$ and $\Lambda $. It is then natural to consider the Monge-Kantorovich problem in the class $\Lambda$  defined as follows
\begin{equation}\label{monge-class} MK_\Lambda(c,\mu,\nu):= \inf _\gamma  \left \{ \int_{X\times Y}c(x,y)\ d\gamma \;\vert\;   \gamma =f\otimes \mu, f_\#\mu=\Lambda \right \} \end{equation}
By Proposition~\ref{prop-class}, the Monge problem corresponds to the transport class $\Lambda = \int_X \delta_{\delta_{t(x)}} d\mu$.
\begin{remark}
{\rm Observe that for a discrete transport class $\Lambda =\sum_i \alpha_i \delta_{\lambda_i}$, the Monge-Kantorovich problem in the class $\Lambda$ yields the {\em optimal allocation problem} consisting in the determination of   a Borel subdivision $\{U_i\}$ of $X$ minimizing 
$$ \sum_i \int_{U_i} \left ( \int_Yc(x,y) d\lambda _i \right ) d\mu , $$ under  the constraint  $\sum_i\alpha_i \lambda_i =\nu$. For existence results in this framework see Section $4$.} %\cite{cuesta-albertos, location,  abdellaoui}. 
\end{remark}

%~~~~~~~~~~~~~~~~~~~~
The notion of transport class leads naturally to consider an abstract Monge problem between the space $X$ and $\P(Y)$. Consider the following transport cost
\begin{equation}\label{ctilde} \forall (x,\lambda) \in X\times \P(Y) \ : \ \tilde c(x,\lambda) = \int_Y c(x,y) d\lambda .\end{equation}
We have the following
\begin{prop}
\label{MKLambda}
For every transport class $\Lambda\subset \Pi(\mu,\nu)$ we have 
$$ M(\tilde c, \mu, \Lambda)=MK_\Lambda(c,\mu,\nu).$$
\end{prop}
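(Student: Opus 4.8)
The plan is to exhibit an explicit correspondence between the competitors of the two problems under which the two cost functionals agree, so that the equality of the infima is immediate.

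\emph{Step 1: the correspondence between plans and maps.} By the Disintegration Theorem~\ref{disintegration}, every $\gamma\in\P(X\times Y)$ with $(\pi^1)_\#\gamma=\mu$ admits a disintegration $\gamma=\nu_x\otimes\mu$ with $x\mapsto\nu_x$ Borel, uniquely determined up to a $\mu$-negligible set; by Lemma~\ref{measurable} the assignment $f(x):=\nu_x$ is a measurable map $X\to(\P(Y),W)$. Conversely, any measurable (equivalently Borel, again by Lemma~\ref{measurable}) map $f:X\to(\P(Y),W)$ produces a transport plan $\gamma=f\otimes\mu$ with first marginal $\mu$. Hence, modulo $\mu$-null sets, $\gamma\mapsto f$ is a bijection between $\{\gamma:\gamma=f\otimes\mu,\ f_\#\mu=\Lambda\}$ and $\{f:X\to\P(Y)\ \text{measurable},\ f_\#\mu=\Lambda\}$. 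Moreover, as shown just before \eqref{lambda-class}, the constraint $f_\#\mu=\Lambda$ together with $\int_{\P(Y)}\lambda\,d\Lambda=\nu$ forces $(\pi^2)_\#\gamma=\nu$, so the left-hand set is precisely the family of competitors of $MK_\Lambda(c,\mu,\nu)$, and the right-hand set is the family of competitors of $M(\tilde c,\mu,\Lambda)$.

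\emph{Step 2: the two costs coincide on corresponding competitors.} Fix $\gamma=f\otimes\mu$ admissible; I claim
$$\int_{X\times Y}c(x,y)\,d\gamma=\int_X\Big(\int_Y c(x,y)\,df(x)\Big)\,d\mu=\int_X\tilde c(x,f(x))\,d\mu.$$
For $c\in\C_b(X\times Y)$ this is exactly property (ii) of Theorem~\ref{disintegration}. For a general nonnegative Borel cost one extends the identity by a monotone class argument: the set of bounded Borel functions $c$ for which it holds is a vector space, closed under bounded monotone limits and containing $\C_b(X\times Y)$, hence contains all bounded Borel functions; the general case follows by truncating $c$ at level $k$ and letting $k\to\infty$ via monotone convergence on both sides. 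The measurability of $(x,\lambda)\mapsto\tilde c(x,\lambda)$, and hence the $\mu$-measurability of $x\mapsto\tilde c(x,f(x))$ as a composition with the measurable map $f$, is obtained along the way, exactly in the spirit of the valuation maps $I_\varphi$ in \eqref{valutation} (for l.s.c.\ $c$ one even gets that $\tilde c$ is l.s.c.\ in $\lambda$).

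\emph{Step 3: conclusion.} By Step~2 the value $\int_{X\times Y}c\,d\gamma$ of a competitor $\gamma=f\otimes\mu$ of $MK_\Lambda(c,\mu,\nu)$ equals the value $\int_X\tilde c(x,f(x))\,d\mu$ of the corresponding competitor $f$ of $M(\tilde c,\mu,\Lambda)$, and by Step~1 this correspondence is a bijection between the two sets of competitors. Taking the infimum over either side therefore yields the same number, i.e.\ $M(\tilde c,\mu,\Lambda)=MK_\Lambda(c,\mu,\nu)$ (both being $+\infty$ if $\Lambda$ admits no admissible $f$, though this does not occur when $\Lambda$ is a genuine transport class).

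The main obstacle I anticipate is not conceptual but measure-theoretic: making Step~2 rigorous for a merely Borel cost $c$ — that is, justifying the identity $\int_{X\times Y}c\,d\gamma=\int_X(\int_Y c(x,y)\,df(x))\,d\mu$ beyond continuous $c$ — and, in tandem, securing the joint measurability of $(x,\lambda)\mapsto\tilde c(x,\lambda)$ needed for the functional $M(\tilde c,\mu,\Lambda)$ to be well defined. Once this bookkeeping is settled, the proposition is a formal consequence of the disintegration bijection.
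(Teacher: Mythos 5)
Your proposal is correct and takes essentially the same route as the paper: the paper's proof consists precisely of your Step 2 identity $\int_X \tilde c(x,f(x))\,d\mu = \int_{X\times Y} c\,d(f\otimes\mu)$, leaving the competitor bijection and the extension from continuous to Borel costs implicit. Your additional care with the monotone class argument and the measurability of $(x,\lambda)\mapsto\tilde c(x,\lambda)$ only makes explicit what the paper takes for granted.
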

\begin{proof}
It suffices to observe that for any disintegration map $f:X\rightarrow \P(Y)$ such that $f_\#\mu = \Lambda$, it results
$$ \int_X \tilde c (x,f(x))\ d\mu = \int _X \left ( \int_Y c(x,y) df(x) \right ) d\mu = \int_{X\times Y} c(x,y)\ d(f\otimes \mu).$$
\end{proof}
Observe that by the above proof it follows that  $f$ is a solution of $M(\tilde c, \mu, \Lambda)$ if and only if $f\otimes \mu$ is a solution of 
$MK_\Lambda(c,\mu,\nu)$. 
Therefore, every existence result for the Monge problem $M(\tilde c, \mu, \Lambda)$ in the abstract setting corresponds to an existence result for the Monge-Kantorovich problem in the transport class $\Lambda$. The abstract setting has the advantage of considering a \emph{nice} cost,  since it is linear with respect to  the second variable. Of course the disadvantage is the passage from the space $X\times Y\subset \R^N\times \R^N$ to the space $X\times \P(Y)$. Observe that by \cite[Theorem 2.4]{pratelli} the transport classes are always non-empty provided $\mu$ is non-atomic. 
%~~~~~~~~~~~~~~~~~~~~ 
%%%%%%%%%%%%%%%%%%%%%%%%%%%%%%%%%%%%%%%%%%%%%%%%%%%%%%%%%%%%%%%
%With respect to regularity, by the properties of weak convergence of measures we have
\begin{prop}\label{cost-reg}
$c$ is continuous, l.s.c., Caratheodory, normal iff $ \tilde c $ is.
\end{prop}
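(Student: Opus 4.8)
The plan is to establish each of the four equivalences (continuity, lower semicontinuity, Carath\'eodory, normality) separately, since they all reduce to the same basic observation: for fixed $x \in X$, the map $\lambda \mapsto \tilde c(x,\lambda) = \int_Y c(x,y)\, d\lambda$ is nothing but the evaluation functional $I_{c(x,\cdot)}$ introduced in \eqref{valutation}, and since $W$ metrizes the weak* topology on $\P(Y)$, such evaluation functionals inherit the regularity of the integrand $c(x,\cdot)$: if $c(x,\cdot)$ is continuous then $I_{c(x,\cdot)}$ is continuous, if $c(x,\cdot)$ is l.s.c.\ then $I_{c(x,\cdot)}$ is l.s.c. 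Conversely, restricting $\tilde c(x,\cdot)$ to the Dirac deltas $\delta_y$ recovers $\tilde c(x,\delta_y) = c(x,y)$, so no regularity is gained by passing to $\tilde c$; this gives the ``only if'' direction in each case essentially for free, because $y \mapsto \delta_y$ is a continuous (indeed isometric) embedding $Y \hookrightarrow (\P(Y),W)$.

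Carrying this out, I would first treat joint continuity: if $c$ is continuous on the compact set $X\times Y$, then given $(x_n,\lambda_n) \to (x,\lambda)$, write $\tilde c(x_n,\lambda_n) - \tilde c(x,\lambda) = \int_Y [c(x_n,y)-c(x,y)]\,d\lambda_n + [I_{c(x,\cdot)}(\lambda_n) - I_{c(x,\cdot)}(\lambda)]$; the first term is bounded by $\sup_y |c(x_n,y)-c(x,y)| \to 0$ by uniform continuity of $c$ on the compact product, and the second tends to $0$ because $\lambda \mapsto I_{c(x,\cdot)}(\lambda)$ is weak* continuous. For the l.s.c.\ case I would use that a l.s.c.\ function on a compact metric space is an increasing limit of continuous (e.g.\ Lipschitz) functions $c_k \nearrow c$; then $\tilde c_k \nearrow \tilde c$ pointwise by monotone convergence, each $\tilde c_k$ is continuous by the previous step, and an increasing sup of continuous functions is l.s.c., giving l.s.c.\ of $\tilde c$. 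For the Carath\'eodory property (continuous in $\lambda$ for a.e.\ $x$, measurable in $x$ for every $\lambda$): continuity in $\lambda$ for fixed $x$ is the first step applied to $c(x,\cdot)$; measurability in $x$ for fixed $\lambda$ follows because $x \mapsto \int_Y c(x,y)\,d\lambda$ is measurable whenever $c(\cdot,y)$ is measurable for each $y$ and $c(x,\cdot)$ is suitably regular — one approximates $\lambda$ by finitely supported measures or invokes Fubini-type measurability of $x \mapsto \int c(x,y)\,d\lambda(y)$ from the joint measurability of $c$. Normality (Carath\'eodory plus a suitable integrability/nonnegativity, or the measurable-selection-compatible version) then follows by combining the measurability in $x$ with the continuity in $\lambda$ in the same way, together with the fact that $\tilde c \ge 0$ since $c \ge 0$.

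The reverse implications are uniformly easy: in each case one restricts $\tilde c$ to $X \times \Delta$ where $\Delta = \{\delta_y : y\in Y\}$ is the closed set of Diracs, uses that $y \mapsto \delta_y$ is a homeomorphism onto $\Delta$ (isometry for $W$, by Kantorovich duality $W(\delta_y,\delta_{y'}) = |y-y'|$), and observes that $\tilde c(x,\delta_y) = c(x,y)$, so regularity of $\tilde c$ transfers to $c$ by composition with this embedding. I would phrase this once and then apply it to all four notions.

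The main obstacle I anticipate is not any single step but rather pinning down the exact definitions of ``Carath\'eodory'' and ``normal'' the authors intend (these terms vary in the literature — normal integrand in the sense of Rockafellar, vs.\ simply Carath\'eodory plus measurability of the epigraph multifunction), and then verifying the measurability-in-$x$ half carefully: one needs that $x \mapsto \int_Y c(x,y)\,d\lambda(y)$ is Borel for each fixed $\lambda \in \P(Y)$, which requires a small argument (approximate $c$ by Carath\'eodory functions that are finite sums $\sum_j a_j(x)\mathbf 1_{B_j}(y)$, or use that joint Borel measurability of $c$ plus Fubini gives Borel measurability of the partial integral against any fixed finite measure). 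Everything else is a direct translation through the weak*–Wasserstein correspondence already exploited in Lemma~\ref{measurable} and Lemma~\ref{push-metric}.
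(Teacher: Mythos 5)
Your proposal is correct; note that the paper states Proposition~\ref{cost-reg} without any proof, so there is no argument of the authors' to compare against, but the route you take is exactly the one the surrounding text implies: weak*-continuity (resp.\ lower semicontinuity, via monotone approximation by continuous functions) of the evaluation functionals $I_\varphi$ from \eqref{valutation} for the ``if $c$ then $\tilde c$'' direction, and the isometric Dirac embedding $y\mapsto\delta_y$ together with $\tilde c(x,\delta_y)=c(x,y)$ for the converse. The only place your sketch is loose is the joint measurability of $\tilde c$ in the Carath\'eodory and normal cases; this is most cleanly settled by the representation $c(x,y)=\sup_h a_h(x)b_h(y)$ with $b_h\in\C(Y)$ that the paper itself invokes later (citing \cite{fonseca-leoni}), which immediately gives $\tilde c(x,\lambda)=\sup_h a_h(x)I_{b_h}(\lambda)$ as a countable supremum of products of a Borel function of $x$ and a $W$-continuous function of $\lambda$, hence Borel on $X\times\P(Y)$. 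With that detail supplied, all four equivalences go through as you describe.
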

Where we say that a measurable map $c(x,y)$ is Caratheodory (resp. normal) if $c(x,\cdot)$ is continuous (resp. l.s.c.).
\begin{lemma}\label{reg}
Let $c : X\times Y \rightarrow [0,+\infty]$ be a Borel cost function satisfying
\begin{equation}\label{reg-cond}
|c(x_1,y)-c(x_2,y)|\leq \alpha (x_1-x_2), \end{equation}
for a given map $\alpha : X \rightarrow \R $ continuous at $x=0$ and such that $\alpha(0)=0$. 
We have the following
\begin{enumerate}
\item If $c(x,\cdot)$ is continuous then $ c$ (and hence $\tilde c$) is continuous.
\item If $c(x,\cdot)$ is l.s.c. then $c$ (and hence $\tilde c$) is l.s.c.
\end{enumerate}
\end{lemma}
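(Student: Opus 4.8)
The plan is to prove the two implications in Lemma~\ref{reg} directly from the structural hypothesis \eqref{reg-cond}, and then invoke Proposition~\ref{cost-reg} to transfer continuity (resp.\ lower semicontinuity) from $c$ to $\tilde c$. Note that the joint regularity statements about $c$ are the real content; the ``hence $\tilde c$'' part is immediate once Proposition~\ref{cost-reg} is available, so I would simply cite it at the end of each case.

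\medskip

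\noindent\emph{Case (1): $c(x,\cdot)$ continuous implies $c$ jointly continuous.}
I would fix $(x_0,y_0)\in X\times Y$ and a sequence $(x_n,y_n)\to(x_0,y_0)$, and estimate
$$|c(x_n,y_n)-c(x_0,y_0)|\le |c(x_n,y_n)-c(x_0,y_n)|+|c(x_0,y_n)-c(x_0,y_0)|.$$
The first term is bounded by $\alpha(x_n-x_0)$, which tends to $0$ because $\alpha$ is continuous at $0$ with $\alpha(0)=0$; the second term tends to $0$ because $c(x_0,\cdot)$ is continuous. Hence $c$ is jointly continuous. One should remark, to be careful, that \eqref{reg-cond} with $\alpha$ continuous at $0$ automatically forces $\alpha\ge 0$ near $0$ and makes $c(\cdot,y)$ continuous at every point uniformly in $y$, which is exactly what drives the argument; I would state this equicontinuity-in-$x$ observation explicitly since it is used again in Case~(2).

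\medskip

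\noindent\emph{Case (2): $c(x,\cdot)$ l.s.c.\ implies $c$ jointly l.s.c.}
Here I would again fix $(x_0,y_0)$ and $(x_n,y_n)\to(x_0,y_0)$, and show $\liminf_n c(x_n,y_n)\ge c(x_0,y_0)$. From \eqref{reg-cond},
$$c(x_n,y_n)\ge c(x_0,y_n)-\alpha(x_n-x_0),$$
so $\liminf_n c(x_n,y_n)\ge \liminf_n c(x_0,y_n)-\lim_n\alpha(x_n-x_0)=\liminf_n c(x_0,y_n)\ge c(x_0,y_0)$, the last inequality by lower semicontinuity of $c(x_0,\cdot)$. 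This handles the sequential characterization of lower semicontinuity, which is valid since $X\times Y\subset\R^N\times\R^N$ is metrizable. Finally, in both cases, Proposition~\ref{cost-reg} (the ``continuous iff'' and ``l.s.c.\ iff'' equivalences) gives the corresponding regularity of $\tilde c(x,\lambda)=\int_Y c(x,y)\,d\lambda$.

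\medskip

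I do not expect a serious obstacle: the only subtlety is bookkeeping the sign and the near-$0$ behaviour of $\alpha$ (it need not be defined as nonnegative globally, but $\alpha(x_n-x_0)\to 0$ is all that is needed), and making sure Proposition~\ref{cost-reg} is genuinely applicable --- i.e.\ that $c$ being jointly l.s.c.\ is what ``$\tilde c$ l.s.c.'' requires, which it is, since $\lambda\mapsto\int_Y c(x,y)\,d\lambda$ being l.s.c.\ on $(\P(Y),W)$ for l.s.c.\ $c(x,\cdot)$ is precisely the content recalled around \eqref{valutation}. If one prefers to avoid sequences, the same estimates work with arbitrary nets or with the $\eps$-$\delta$ formulation, using that $\{\alpha<\eps\}$ is a neighbourhood of $0$.
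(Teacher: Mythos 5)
Your proof is correct and follows essentially the same route as the paper: the same triangle-inequality splitting $|c(x_n,y_n)-c(x_0,y_0)|\le |c(x_n,y_n)-c(x_0,y_n)|+|c(x_0,y_n)-c(x_0,y_0)|$ for continuity, and the same decomposition $c(x_n,y_n)=\bigl(c(x_n,y_n)-c(x_0,y_n)\bigr)+c(x_0,y_n)$ with the $\alpha$-term vanishing for lower semicontinuity. Your extra remarks on equicontinuity in $x$ and on passing to $\tilde c$ via Proposition~\ref{cost-reg} are consistent with how the paper organizes the material.
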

\begin{proof}
%By condition \eqref{reg-cond} it follows that $c(x,y)\leq K(x)$. For fixed $x\in X$, for every $\lambda \in \M(Y)$ we compute
%$$ |\tilde c(x,\lambda)| = \left | \int _Y c(x,y) d\lambda \right |\leq k(x) \|\lambda \|.$$ Therefore, the linear functional $\tilde c(x, \cdot ) : \M(Y)\rightarrow \R$ is continuous with respect to the  total variation norm. Therefore,
Let $c(x,\cdot ) $ be continuous. 
%it follows that  $\tilde c(x, \cdot )$ is also continuous with respect to the weak convergence of measures over $\P(Y)$. Consider now
If $(x_n, y_n)\rightarrow (x,y)$ on $X\times Y$ we compute
$$ | c(x,y)- c(x_n, y _n)| \leq | c(x,y)- c(x,y _n)|+| c(x,y _n)- c(x_n,y _n)|\leq$$
%$$|\tilde c(x, \lambda-\lambda_n)|+ \int _Y|c(x,y)-c(x_n,y)|d\lambda _n \leq
$$ | c(x,y)- c(x,y _n)|+ %|\tilde c(x, \lambda-\lambda_n)|+ 
\alpha (x-x_n) \rightarrow 0 ,$$ as $n\rightarrow + \infty$. 
If $c(x,\cdot )$ is l.s.c. 
%by the properties of the weak convergence of measures we get that also $\tilde c (x,\cdot )$ is l.s.c.
considering $$c(x_n,y_n) =  c(x_n,y_n)- c(x,y_n)+ c(x,y_n),$$ by \eqref{reg-cond}, passing to the liminf we obtain
$$\liminf_{n\rightarrow + \infty} c(x_n,y_n)= \liminf_{n\rightarrow + \infty} c(x,y_n)\geq  c(x,y).$$
\end{proof}
%~~~~~~~~~~~~~~~~~~~~
In general, for the existence of optimal transport plans in the Kantorovich problem at least the lower semicontinuity  property of the cost function is usually required. 
%In \cite{monge-sol} it is shown existence for normal cost. 
Actually, some regularity of the cost function is needed to obtain a useful duality theory or to ensure that the infimum of the Kantorovich problem is equal to the infimum of the Monge problem  (see for instance \cite{a1, pratelli}). However, it is not hard to verify that the Kantorovich problem admits solutions under more weak requirements  on the cost function  (see for instance \cite{bernard}). 

For  reader's convenience  here we provide  some details based on disintegration maps.\\ 
Let $\pi_n=f_n\otimes \mu $ be a minimizing sequence for the Monge-Kantorovich problem \eqref{Monge-Kantorovich}. Passing to a subsequence we may suppose that $\pi_n\rightharpoonup f\otimes \mu =\pi \in \Pi(\mu,\nu)$. Now, for any  $\psi(x)\in \C(X), \varphi (y)\in \C(Y)$, we get
\begin{equation}\label{weak-conv}\begin{split} \int_{X\times Y}\psi(x)\varphi (y)\ d \pi =  \int_X \psi (x) \left ( \int_Y \varphi (y) df(x) \right ) d\mu =\\  \lim_{n\rightarrow + \infty}\int_X \psi (x)\left ( \int_Y \varphi (y) df _n(x) \right ) d\mu =\lim_{n\rightarrow + \infty}\int_{X\times Y}\psi(x)\varphi (y)\ d \pi _n .\end{split}\end{equation}
By density of continuous functions, the above limit holds for $\psi \in L^1(X,\mu)$ as well. Therefore, if the cost function has  the form
$c(x,y)=a(x)b(y)$, with $a\in L^1, b\in \C$,  then by \eqref{weak-conv} it follows that $\pi $ is an optimal transport plan. 
Arguing component-wise, the same reasonings apply to linear costs $c(x,y)=\langle a(x), y \rangle $. For a Caratheodory cost function $c(x,y)$, i.e. a Borel map such that $c(x,\cdot )$ is continuous,  observe that the disintegration maps $f_n:X\rightarrow \P(Y)\subset \mathcal M(Y,\R)$ belongs to $L^\infty (X, \mathcal M(Y,\R))$, which is the dual of $L^1(X, \C(Y))$. Therefore, by passing to a subsequence we may suppose that $f_n \stackrel{*}{\rightharpoonup}f$, i.e. 
$$ \lim_{n\rightarrow + \infty} \int_ X \left ( \int_Y \psi(x,y) df_n(x) \right ) d\mu = \int_ X \left ( \int_Y \psi(x,y) df (x) \right )d\mu\:\:\:
\forall \psi \in L^1(X, \C(Y)).$$ 
The above continuity property shows that $\pi=f\otimes \mu$ is an optimal transport plan, 
provided that $\int_Y \sup _y c(x,y)\ d\mu <+\infty$. \\
If  $c(x,y)$ is a normal cost, i.e. a Borel measurable map such that $c(x,\cdot )$ is lower semicontinuous, then it can be reduced to a Caratheodory cost by standard approximation procedures. 
For instance  (see \cite{fonseca-leoni}), we may write 
$$ c(x,y)=\sup _h a_h(x)b_h(y), \quad b_h\in \C(Y) .$$
Hence, the cost $c_j=\sup_{i\leq j} a_ib_i$ is Caratheodory and $c_j\nearrow c$. Since
$$\int_{X\times Y}c_j(x,y)d\pi_n \leq \int_{X\times Y} c(x,y) d\pi_n ,$$ passing to the limit we obtain
$$\int_{X\times Y}c_j(x,y)d\pi\leq \liminf_{n\rightarrow + \infty}\int_{X\times Y} c(x,y) d\pi_n .$$ Passing to the limit with respect to $j$ we get that  $\pi$ is optimal. By representation of weakly* l.s.c. functionals (see \cite{a-f-p}), the same reasonings apply to normal cost on $X\times \P(Y)$. 
For a related result see also \cite{solution-kant}. 
%%%%%%%%%%%%%%%%%%%%%%%%%%%%%%%%%%%%%%%%%%%%%%%%%%%%%%%%%%%%%%%%%%%%%%%%%%%%%
%The present approach over transport classes is also useful to show existence under weak assumption on the transport cost.       
%Actually, the cost $\tilde c$ can be regular under minimal requirement on the cost $c$.
%Hence $\tilde c$ is continuous.
%Therefore, the abstract setting could be useful to treat the Monge-Kantorovich problem.
%Comparare con Levin.
%$$\min \left \{ \mathcal R(f) \ : \ f_\sharp \mu = \Lambda , \int_{\P(Y)}\lambda \ d\Lambda =\nu, e_f\leq K, c_f \leq H \right \}.$$In ogni casobisognerebbe capire qualcosa sulla geometria delle classi. In fondo anche minimizzare tra tutte le mappe $u_\sharp \mu = \nu$ significa considerare(se il vincolo � non vuoto) il minimo su tutte le mappe $v$ tali che $v_\sharp \mu = u_\sharp \mu$ e che quindi appartengono alla stessaclasse. I Teoremi di \cite{a1} quindi direbbero che $$\Pi(\mu,\nu)= \overline{[\mu\otimes \delta_u]}^W,$$ for $u_\sharp \mu = \nu$. Il problemasulla mappe di trasporto � quindi in una classe densa. Mi pare utile capire se ci sono altre classi ad interno non vuoto, in particolare altre classi dense.
%%%%%%%%%%%%%%%%%%%%%%%%%%%%%%%%%%%%%%%%%%%%%%%%%%%%%%%%%%%%%%%%%%%%%%%%%%%%%%%%%%%%%%%%%%%%%%

%~~~~~~~~~~~~~~~~~~~~
In the following  we  compare the Kantorovich problem \eqref{Monge-Kantorovich} 
with the abstract version formulated using  the transport classes. 

\begin{lemma}\label{comp-kant}
For every transport class $\Lambda\subset\Pi(\mu,\nu)$ it results
$$ MK(c,\mu,\nu)\leq MK(\tilde c, \mu, \Lambda).$$
\end{lemma}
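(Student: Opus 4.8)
Here $MK(\tilde c,\mu,\Lambda)=\inf\{\int_{X\times\P(Y)}\tilde c(x,\lambda)\ d\sigma\ :\ \sigma\in\Pi(\mu,\Lambda)\}$ is the Kantorovich problem, in the abstract space $X\times\P(Y)$, between $\mu$ and the transport class $\Lambda$ for the linear cost $\tilde c$ of \eqref{ctilde}. The plan is to show that any admissible plan $\sigma$ for this abstract problem can be ``projected down'' to an admissible transport plan $\gamma\in\Pi(\mu,\nu)$ carrying exactly the same total cost; the inequality then follows by taking the infimum over $\sigma$.

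First I would fix an arbitrary $\sigma\in\Pi(\mu,\Lambda)$ (such $\sigma$ exists, e.g. $\sigma=\mu\otimes\Lambda$). Applying the Disintegration Theorem~\ref{disintegration} with respect to the first marginal $\mu$, write $\sigma=\sigma_x\otimes\mu$ with $\sigma_x\in\P(\P(Y))$ for $\mu$-a.e.\ $x$. Then I would define $\gamma\in\P(X\times Y)$ by
$$\int_{X\times Y}\varphi(x,y)\ d\gamma\ :=\ \int_X\left(\int_{\P(Y)}\left(\int_Y\varphi(x,y)\ d\lambda(y)\right)d\sigma_x(\lambda)\right)d\mu(x),\qquad \varphi\in\C_b(X\times Y),$$
i.e.\ $\gamma=\gamma_x\otimes\mu$ where $\gamma_x=\int_{\P(Y)}\lambda\ d\sigma_x(\lambda)\in\P(Y)$ is the barycenter of the conditional measure $\sigma_x$ (equivalently, $\gamma$ is the image of the glued measure on $X\times\P(Y)\times Y$ under $(x,\lambda,y)\mapsto(x,y)$). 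Borel measurability of $x\mapsto\gamma_x$ is automatic from Theorem~\ref{disintegration} together with measurability of the barycenter map.

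Next I would verify $\gamma\in\Pi(\mu,\nu)$. That $\pi^1_\#\gamma=\mu$ is immediate. For the second marginal, taking $\varphi=\varphi(y)$ in the defining formula and using that $\sigma_x\otimes\mu$ has second marginal $\Lambda$ gives $\int_Y\varphi\ d(\pi^2_\#\gamma)=\int_{\P(Y)}\left(\int_Y\varphi\ d\lambda\right)d\Lambda(\lambda)$, which by the barycenter constraint \eqref{lambda-class} equals $\int_Y\varphi\ d\nu$; hence $\pi^2_\#\gamma=\nu$. Finally, taking $\varphi(x,y)=c(x,y)$ (all quantities lie in $[0,+\infty]$, so no integrability issue arises) and recalling $\tilde c(x,\lambda)=\int_Y c(x,y)\ d\lambda$, Fubini yields
$$\int_{X\times Y}c(x,y)\ d\gamma=\int_X\int_{\P(Y)}\tilde c(x,\lambda)\ d\sigma_x(\lambda)\,d\mu(x)=\int_{X\times\P(Y)}\tilde c(x,\lambda)\ d\sigma(x,\lambda).$$
Thus $MK(c,\mu,\nu)\le\int_{X\times Y}c\ d\gamma=\int_{X\times\P(Y)}\tilde c\ d\sigma$, and taking the infimum over $\sigma\in\Pi(\mu,\Lambda)$ gives the claim.

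The only delicate point is purely measure-theoretic: making the gluing/barycenter construction rigorous (Borel measurability of $x\mapsto\gamma_x$ and the legitimacy of the Fubini interchange), which is dispatched directly by Theorem~\ref{disintegration} and a monotone-class argument. There is no analytic obstacle, since $c\ge 0$ and every integral above may be evaluated in $[0,+\infty]$.
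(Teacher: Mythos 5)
Your proof is correct and is essentially identical to the paper's: both disintegrate an arbitrary $\sigma\in\Pi(\mu,\Lambda)$ as $\sigma_x\otimes\mu$, pass to the barycenter $\gamma_x=\int_{\P(Y)}\lambda\,d\sigma_x(\lambda)$, verify the second marginal via the constraint \eqref{lambda-class}, and match the costs by Fubini. The only difference is that you spell out the measurability and interchange issues that the paper leaves implicit.
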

\begin{proof}
Let $\tilde \pi= \mathcal N(x)\otimes \mu   \in \Pi(\mu, \Lambda)$. We compute
\begin{equation}\label{comp-eq}\begin{split}
\int _{X\times \P(Y)}\tilde c \ d\tilde \pi = \int_X \left ( \int_{\P(Y)} \left ( \int _Y c(x,y) d \lambda \right ) d \mathcal N(x) \right ) d\mu =
\\
\int_X \left ( \int_Y c(x,y) df(x) \right ) d\mu = \int_{X\times Y} c(x,y)\ d(f\otimes \mu),\end{split}
\end{equation}
where $f(x) = \int_{\P(Y)} \lambda \ d \mathcal N(x)$.
It remains to check that $(\pi_2)_\#(f\otimes \mu)=\nu$. By  \eqref{lambda-class}, for every $\varphi\in \C(Y)$ we have

$$ \int_X\left ( \int _Y \varphi (y) df(x) \right ) d\mu = \int_X \left ( \int_{\P(Y)} \left ( \int _Y \varphi (y) d \lambda \right ) d \mathcal N(x) \right ) d\mu = \int_{X\times \P(Y)} I_\varphi (\lambda)\ d \tilde \pi =$$ $$= \int_{\P(Y)} I_\varphi (\lambda)\ d \Lambda = \int_{\P(Y)}\left ( \int_Y  \varphi (y) \ d\lambda \right ) d\Lambda = \int_Y \varphi (y)\ d\nu .$$ 
\end{proof}
%\begin{remark}\label{baricenter}
Observe that the above definition of $f(x)=\beta(\mathcal N (x))$ can be seen as a generalized barycenter map. Indeed, we have the following
%\end{remark}
\begin{lemma}\label{baric}
The generalized barycenter map 
$\beta : \P(\P(Y))\rightarrow \P(Y)$ defined by 
$$\beta (\mathcal N)=\int _{\P(Y)} \lambda \ d \mathcal N$$ is $1$-Lipschitz with respect to the Wasserstein distance.
\end{lemma}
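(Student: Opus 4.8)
The plan is to reduce the statement to the Kantorovich dual formulation \eqref{wd1} of the $1$-Wasserstein distance, applied at the two levels $\P(Y)$ and $\P(\P(Y))$. Recall first that since $Y\subset\R^N$ is compact, $(\P(Y),W)$ is itself a compact metric space, so $W$ is well defined on $\P(\P(Y))$; and $\beta(\mathcal N)$ is genuinely an element of $\P(Y)$, being a Borel measure with $\beta(\mathcal N)(Y)=\int_{\P(Y)}\lambda(Y)\,d\mathcal N=1$ and concentrated on $Y$ because each $\lambda$ is.

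Fix $\mathcal N_1,\mathcal N_2\in\P(\P(Y))$. Applying Kantorovich duality \eqref{wd1} on $(\P(Y),W)$ to the pair $\beta(\mathcal N_1),\beta(\mathcal N_2)$ gives
$$W(\beta(\mathcal N_1),\beta(\mathcal N_2))=\sup_{u\in\lip_1(Y)}\int_Y u\ d\big(\beta(\mathcal N_1)-\beta(\mathcal N_2)\big),$$
so it suffices to bound $\int_Y u\ d(\beta(\mathcal N_1)-\beta(\mathcal N_2))$ by $W(\mathcal N_1,\mathcal N_2)$ for each fixed $u\in\lip_1(Y)$. For such a $u\in\lip_1(Y)\subset\C(Y)$, the valuation map $I_u$ from \eqref{valutation} is continuous on $(\P(Y),W)$, and by the definition of $\beta$ together with Fubini's theorem,
$$\int_Y u(y)\ d\beta(\mathcal N_i)(y)=\int_{\P(Y)}\Big(\int_Y u(y)\ d\lambda(y)\Big)d\mathcal N_i(\lambda)=\int_{\P(Y)}I_u(\lambda)\ d\mathcal N_i(\lambda),\qquad i=1,2.$$

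The crucial observation is that $I_u$ is $1$-Lipschitz on $(\P(Y),W)$: for $\lambda_1,\lambda_2\in\P(Y)$,
$$|I_u(\lambda_1)-I_u(\lambda_2)|=\Big|\int_Y u\ d(\lambda_1-\lambda_2)\Big|\leq W(\lambda_1,\lambda_2),$$
again by \eqref{wd1}, since $u\in\lip_1(Y)$ is one of the competitors in that supremum. Hence $I_u\in\lip_1\big((\P(Y),W)\big)$, and applying \eqref{wd1} on the compact metric space $(\P(Y),W)$ to $\mathcal N_1,\mathcal N_2$ with the admissible test function $I_u$ yields
$$\int_{\P(Y)}I_u\ d(\mathcal N_1-\mathcal N_2)\leq W(\mathcal N_1,\mathcal N_2).$$
Chaining the three displays and taking the supremum over $u\in\lip_1(Y)$ gives $W(\beta(\mathcal N_1),\beta(\mathcal N_2))\leq W(\mathcal N_1,\mathcal N_2)$, which is the claim.

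All the computations are routine; the only points needing a word of care are the Fubini exchange in the second step (legitimate since $u$ is bounded Borel, $(\lambda,y)\mapsto u(y)$ is jointly measurable, and $I_u$ is continuous hence a bona fide integrand against $\mathcal N_i$) and keeping the two superimposed layers of Kantorovich duality straight. I do not anticipate any genuine obstacle; alternatively one could argue by gluing an optimal plan $\Theta\in\Pi(\mathcal N_1,\mathcal N_2)$ on $\P(Y)\times\P(Y)$ into a plan on $Y\times Y$ via disintegration, but the dual argument above is shorter and avoids measurable-selection technicalities.
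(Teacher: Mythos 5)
Your proof is correct and is essentially the paper's own argument: both reduce to Kantorovich duality on $Y$, note that $u\in\lip_1(Y)$ forces $I_u\in\lip_1(\P(Y))$, exchange the order of integration to rewrite $\int_Y u\,d\beta(\mathcal N_i)$ as $\int_{\P(Y)}I_u\,d\mathcal N_i$, and then apply duality once more on $(\P(Y),W)$ before taking the supremum over $u$. The only difference is that you spell out the routine verifications (well-definedness of $\beta(\mathcal N)$ and the Fubini step) that the paper leaves implicit.
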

\begin{proof}
First observe that if $\varphi \in \lip_1(Y)$ then $I_\varphi \in \lip_1(\P(Y))$. Fixed $\varphi \in \lip _1(Y)$ we get
$$ \int_Y \varphi \ d(\beta (\mathcal N_1)-\beta (\mathcal N_2)) = \int_{\P(Y)} \left ( \int _Y \varphi \ d\lambda \right ) d (\mathcal N_1-\mathcal N_2) = 
$$ $$ = \int_{\P(Y)} I_\varphi (\lambda)\ d(\mathcal N_1-\mathcal N_2) \leq W(\mathcal N_1,\mathcal N_2).$$ 
Taking the supremum with respect to $\varphi \in \lip_1(Y)$ it results 
$$ W(\beta (\mathcal N_1),\beta (\mathcal N_2))\leq W(\mathcal N_1,\mathcal N_2).$$

\end{proof}
Observe that a probability measure $\Lambda$ on $\P(Y)$ defines a transport class iff its generalized barycenter is equal to $\nu$. 
%%%%%%%%%%%%%%%%%%%%%%%%%%%%%%%%%%%%%%%%%%%%%%%%%%%%%%%%%%%%%%%%%%%%%%%
\begin{lemma}\label{equal-class}
If $\tilde c$ is normal  then there exists a transport class $\Lambda\subset\Pi(\mu,\nu)$ such that
$$ MK(c,\mu,\nu)= MK(\tilde c, \mu, \Lambda).$$
\end{lemma}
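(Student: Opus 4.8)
The statement we must prove is that when $\tilde c$ is normal, the infimum $MK(\tilde c,\mu,\Lambda)$ over some transport class $\Lambda$ actually realizes the Kantorovich value $MK(c,\mu,\nu)$. Combined with Lemma~\ref{comp-kant}, which already gives $MK(c,\mu,\nu)\le MK(\tilde c,\mu,\Lambda)$ for \emph{every} transport class, we only need to exhibit \emph{one} transport class $\Lambda$ for which the reverse inequality $MK(\tilde c,\mu,\Lambda)\le MK(c,\mu,\nu)$ holds. So the plan is: start from an optimal transport plan for the Kantorovich problem, manufacture from it the "right" measure $\Lambda$ on $\P(Y)$, and show this $\Lambda$ carries a competitor for the abstract problem with cost no larger than $MK(c,\mu,\nu)$.

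\medskip

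\noindent First I would invoke the discussion following Lemma~\ref{reg} (or the standard existence theory, since $\tilde c$ normal forces $c$ normal by Proposition~\ref{cost-reg}, Proposition~\ref{prop-class} aside) to pick an optimal $\gamma=\nu_x\otimes\mu\in\Pi(\mu,\nu)$ realizing $MK(c,\mu,\nu)$. Let $f:X\to\P(Y)$ be its disintegration map, $f(x)=\nu_x$, and set $\Lambda:=f_\#\mu\in\P(\P(Y))$. By the computation already displayed at the start of Section~2 (the chain of equalities showing that $(\pi^2)_\#(f\otimes\mu)=\nu$ forces $\int_{\P(Y)}\lambda\,d\Lambda=\nu$), this $\Lambda$ satisfies the barycenter constraint \eqref{lambda-class}, hence genuinely defines a transport class $\Lambda\subset\Pi(\mu,\nu)$. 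Now the identity map $x\mapsto f(x)$ trivially pushes $\mu$ forward to $\Lambda$, so $f$ itself is admissible for $M(\tilde c,\mu,\Lambda)$; equivalently, $\tilde\pi:=(I\times f)_\#\mu=\delta_{f(x)}\otimes\mu\in\Pi(\mu,\Lambda)$ is admissible for $MK(\tilde c,\mu,\Lambda)$. Its cost is
$$\int_X \tilde c(x,f(x))\,d\mu=\int_X\Bigl(\int_Y c(x,y)\,df(x)\Bigr)d\mu=\int_{X\times Y}c(x,y)\,d(f\otimes\mu)=\int_{X\times Y}c\,d\gamma=MK(c,\mu,\nu),$$
using exactly the elementary identity from the proof of Proposition~\ref{MKLambda}. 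Hence $MK(\tilde c,\mu,\Lambda)\le MK(c,\mu,\nu)$, and Lemma~\ref{comp-kant} supplies the opposite inequality, giving equality.

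\medskip

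\noindent I would remark that the hypothesis "$\tilde c$ normal" is used only to guarantee that the Kantorovich problem $MK(c,\mu,\nu)$ admits an optimal plan $\gamma$ to begin with (via Proposition~\ref{cost-reg} together with the existence discussion after Lemma~\ref{reg}); the rest of the argument is purely algebraic disintegration bookkeeping and does not require any semicontinuity. In fact the cleanest version of the proof extracts the minimizing $\gamma$ and reuses it verbatim as the data defining $\Lambda$, so the "abstract" Monge problem on $X\times\P(Y)$ is solved by the \emph{same} optimal object, just relabelled through $x\mapsto\nu_x$.

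\medskip

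\noindent The only genuinely delicate point — and the step I would flag as the main obstacle — is ensuring that the optimal $\gamma$ exists under the stated hypothesis and that $\tilde c(x,f(x))=\int_Y c(x,y)\,d\nu_x$ is $\mu$-integrable (so that "cost $=MK(c,\mu,\nu)<+\infty$" is meaningful and the interchange of integrals is legitimate). Finiteness of $MK(c,\mu,\nu)$ is not automatic for a general normal cost on compact $X\times Y$ unless one knows $\Pi(\mu,\nu)$ contains a plan of finite cost; if one wants to be careful, one either assumes $MK(c,\mu,\nu)<+\infty$ outright, or notes that when the value is $+\infty$ the identity $+\infty=+\infty$ holds trivially by Lemma~\ref{comp-kant}. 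Once that technicality is dispatched, everything else is a direct consequence of the Disintegration Theorem and the already-proved lemmas of the section.
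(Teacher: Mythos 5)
Your proof is correct, but it takes a genuinely different route from the paper's. You first secure an \emph{optimal} plan $\gamma=\nu_x\otimes\mu$ for $MK(c,\mu,\nu)$ (using that $\tilde c$ normal forces $c$ normal, via Proposition~\ref{cost-reg}, together with the existence discussion after Lemma~\ref{reg}), set $\Lambda=f_\#\mu$ for its disintegration map $f$, and then the equality $\int_X\tilde c(x,f(x))\,d\mu=\int_{X\times Y}c\,d\gamma$ is pure bookkeeping; normality is used only once, to get the minimizer. The paper instead never extracts a minimizer of the original problem: it takes a minimizing sequence $\pi_n=f_n\otimes\mu$, pushes forward to $\Lambda_n=(f_n)_\#\mu$, extracts weak limits $\Lambda_n\rightharpoonup\Lambda$ and $\tilde\pi_n=(I\times f_n)_\#\mu\rightharpoonup\tilde\pi\in\Pi(\mu,\Lambda)$, checks the barycenter constraint in the limit, and uses normality of $\tilde c$ to get lower semicontinuity of the lifted functional, so that $MK(\tilde c,\mu,\Lambda)\leq\liminf_n\int\tilde c\,d\tilde\pi_n=MK(c,\mu,\nu)$; Lemma~\ref{comp-kant} closes the gap exactly as in your argument. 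What each buys: your version is shorter and isolates the role of the hypothesis cleanly, but it leans on the (somewhat informally justified) attainment of $MK(c,\mu,\nu)$ for normal costs; the paper's version needs only semicontinuity of the lifted cost and produces a possibly different $\Lambda$ as a weak limit of the classes $\Lambda_n$, which is all the existential statement requires. Your closing remark about the degenerate case $MK(c,\mu,\nu)=+\infty$ is a sensible precaution that the paper does not address.
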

\begin{proof}
Let $\pi_n =f_n\otimes \mu \in \Pi(\mu,\nu)$ be a minimizing sequence for $MK(c,\mu,\nu)$. Set $\Lambda _n = (f_n)_\#\mu$. By passing to a subsequence we have that $\Lambda _n \rightharpoonup \Lambda$ with respect to the weak convergence of measures. Observe that
$$ \int_{\P(Y)} \left ( \int_Y \varphi \ d\lambda \right ) d\Lambda = \int_{\P(Y)} I_\varphi (\lambda )\ d\Lambda = \lim_{n\rightarrow + \infty}\int_{\P(Y)} I_\varphi (\lambda )\ d\Lambda _n =  \lim_{n\rightarrow + \infty}\int_X I_\varphi (f_n(x))\ d\mu = $$ $$ = \lim_{n\rightarrow + \infty}\int_X \left ( \int _Y  \varphi(y)  df_n(x) \right ) d\mu = \int _Y \varphi \ d\nu .$$ Therefore, $\Lambda$ defines a transport class.
Consider the transport plans $\tilde \pi_n = (I\times f_n)_\# \mu \in \Pi(\mu, \Lambda _n)$. Passing to a subsequence we may also suppose that weakly
$\tilde \pi_n \rightharpoonup \tilde \pi \in \Pi(\mu, \Lambda)$.
Since $\tilde c$ is normal we get
%~~~~~~~~~~~~~~~~~~~~
$$
MK(\tilde c, \mu, \Lambda)\leq \int _{X\times \P(Y)} \tilde c\  d \tilde \pi \leq \liminf_{n\rightarrow + \infty} \int _{X\times \P(Y)} \tilde c \ d \tilde \pi_n =$$ $$ = \liminf_{n\rightarrow + \infty} \int _X \tilde c(x,f_n(x)) d\mu =\liminf_{n\rightarrow + \infty} \int_X \left (\int_Y c(x,y) df_n(x) \right ) d\mu = MK(c,\mu,\nu). $$ The result follows  by Lemma \ref{comp-kant}.
\end{proof}
By the above analysis the  Kantorovich problem over $X\times \P(Y)$ is essentially equivalent to the usual Kantorovich's one. Indeed, if $\tilde c$ is normal  (for instance if the cost $c$  satisfies the conditions of Lemma~\ref{reg}) 
%then there exists an optimal transport planfor $MK(c,\mu,\nu)$.\end{theorem}\begin{proof}
by Lemma~\ref{equal-class} we have $MK(c,\mu,\nu)= MK(\tilde c, \mu, \Lambda)$ for a transport class $\Lambda$. 
 If $\tilde \pi=\mathcal N(x)\otimes \mu  $ is a transport plan for $MK(\tilde c, \mu ,  \Lambda )$, for such transport class $\Lambda$, setting $f(x)= \int_{\P(Y)} \lambda \ d \mathcal N(x)$, by  \eqref{comp-eq} it follows that $f(x)\otimes \mu $ is an optimal plan for $MK(c,\mu , \nu)$.
%\end{proof}
For a related relaxation procedure see \cite{extreme}. 
%%%%%%%%%%%%%%%%%%%%%%%%%%%%%%%%%%%%%%%%%%%%%%%%%%%%%%%%%%%%%%%%%%%%%%%%%
\section{Existence and uniqueness for the Monge problem}
In the previous section we have seen that the Monge-Kantorovich problem is essentially equivalent to the transport problem on transport classes. Of course, the Monge problem reveals hard to handle. For linear cost $c(x,y)=\langle x,y \rangle $, which is equivalent to the quadratic cost $c(x,y)=|x-y|^2$ because of the expansion $|x-y|^2 = |x|^2 + |y|^2 -2\langle x, y \rangle$,  it is relatively easy to find existence and uniqueness of optimal transport maps. 
We expect some advantage by considering the special form of the cost $\tilde c$ for the Monge problem on a transport class. In order to handle with the abstract Monge problem, we review here some usual tools for solving the Monge problem.
The available approaches to existence and uniqueness rely more or less on two basic facts. The first one is based on the notion of  $c$-cyclical monotonicity. A set $S\subset X\times Y$ is said  $c$-cyclical monotone if for any finite set of pairs $(x_1,y_1), \ldots , (x_k, y_k)$ and any permutation $\sigma$ the following inequality holds true
$$ \sum_{i=1}^k c(x_i,y_i)\leq \sum_{i=1}^k c(x_i,y_{\sigma(i)}).$$ A fundamental fact in mass transportation is that the support of every optimal transport plan is  a $c$-cyclical monotone set and every $c$-cyclical monotone set is contained in the $c$-superdifferential $\partial ^c \psi $ (or contact set) of a $c$-concave function $\psi$,  where 
\begin{equation}\label{c-diff} 
\partial ^c\psi =\{y\in Y \ : \ \psi (x')-\psi (x)\leq c(x',y)-c(x,y)\  \forall x' \in X \} .
\end{equation}
A function $\psi$ is said to be $c$-concave if there exist $A\times B\subset Y\times \R$ such that $$\psi(x)=\inf_{(y,t)\in A\times B} c(x,y)+t.$$
For details we refer the reader for instance to \cite{pratelli, v2}. 
The $c$-transform of $\psi$ is defined by $\psi^c(y)=\inf_{x\in X}\{c(x,y)-\psi (x) \}$. It can be shown that $y\in  \partial ^c\psi\Leftrightarrow \psi(x)+\psi^c(y)=c(x,y)$. 
%%%%%%%%%%%%%%%%%%%%%%%%%
If one is able to show that for $\mu$ a.e. $x\in X$ the $c$-superdifferential is single valued, then every transport plan is supported on the graph of a transport map  (see \cite{a1, pratelli, v2}). Namely,  there exists a unique solution of the Monge problem. 
The same reasonings apply as well directly to the abstract problem $M(\tilde c,\mu, \Lambda)$. In other words,  if the $\tilde c$-superdifferentials is single-valued, then the Monge problem $M(\tilde c,\mu, \Lambda)$ admits a unique solution. 
In this framework the two Monge problems are essentially related. Indeed, suppose that the $\tilde c$-superdifferentials contain just one Dirac delta
and 
let $\psi$ be  a $c$-concave function. 
We set  
$$\tilde A=\{\delta _y :y\in A\}\times B \subset \P(Y)\times \R$$ and 
 $\tilde \psi (x)=\inf_{(\lambda, t)\in \tilde A\times B}\tilde c(x,\lambda)+t$. 
 It follows that $\tilde \psi $ is $\tilde c$-concave and $\tilde \psi (x)=\psi(x)$. 
 Recalling that $\tilde c(x,\delta_y)=c(x,y)$, the following implications hold true
$$ y\in \partial ^c\psi (x)\Leftrightarrow \psi(x')-\psi(x) \leq c(x',y)-c(x,y) \Leftrightarrow $$ 
$$ \tilde \psi(x')-\tilde \psi(x) \leq \tilde c(x',\delta_y)-\tilde c(x,\delta_ y)\Leftrightarrow \delta_y \in \partial ^{\tilde c}\tilde \psi (x). $$ Therefore $\partial ^c \psi (x)=\{y\}.$

Vice-versa, suppose that $c$-superdifferentials are single valued and let $\tilde \psi$ be a $\tilde c$-concave function. Consider the $c$-transform
$(\tilde \psi) ^c(y)=\inf_x\{ c(x,y)-\tilde \psi(x)\}$.\\ 
We have
$$ \delta_y \in \partial ^{\tilde c}\tilde \psi (x)\Leftrightarrow \tilde \psi(x')-\tilde \psi (x)\leq c(x',y)-c(x,y) \Leftrightarrow c(x,y)-\tilde \psi (x)\leq c(x',y)-\tilde \psi (x')$$ $$ \Leftrightarrow c(x,y)-\tilde \psi (x) = (\tilde \psi )^c(y) \Leftrightarrow y \in \partial ^c \tilde \psi (x) \subset \partial ^c u(x)$$ 
for a $c$-concave function $u$  (see for instance \cite[Remark 3.12, Theorem 3.10]{pratelli}). It follows that  $\tilde \psi ^c(y)$ contains just  one delta. 
%%%%%%%%%%%%%%%%%%%%%%%%%%%%%%%%%%%%%%%%%%%%%%%%%%%%%%%%%%%%%%%%%%%%%%%%%%%%%%%%%%%%%%%%%%%%%%%
%~~~~~~~~~~~~~~~~~~~~
Of course this singleton condition of superdifferentials can be achieved under  additional requirements on the cost function. 
  For a differentiable cost a general condition relies in  the so called {\em twist (or Spence-Mirrlees in economic settings) condition}, i.e. 
  \begin{equation}
  \label{twist}
  x\mapsto c(x,y_1)-c(x,y_2)\:\:  \hbox{has no critical point}\:\:  \forall \;y_1\neq y_2
  \end{equation}
    (see \cite{gangbo, levin, carlier}). For a generalization of such condition,  in the case of suitable geometries see \cite{mccann}. 
  In the case  $X=Y=M$ with $M$ a Riemannian manifold and for a Lagrangian cost it is enough for the cost $c$ to satisfy the Mather's shortening principle 
  and the connectedness of  the $c$-superdifferential,  as it is shown in \cite[Chapter 9]{v2}. Observe that in this case the connectedness is a key property. This property is easily satisfied if $c(x,\cdot )$ is linear as happens just for the cost $c(x,y)=\langle x, y\rangle$. 
  However, general forms  of the cost $c$ which guarantee the connectedness of the $c$-superdifferential are not known. In this perspective, the consideration of the linear cost $\tilde c$ could be  useful. 
  Observe that for a cost $c$ linear with respect to the second variable, the twist condition is not in general satisfied. Consider for instance 
  $c(x,y)=\langle a(x), y \rangle $ for possibly not invertible Jacobian matrix $\nabla a(x)$.
%%%%%%%%%%%%%%%%%%%%%%%%%%%%%%%%%%%%%%%%%%%%%%%%%%%%%%%%%%%%%%%%%
\subsection{Monge-Mather's shortening principle}
By considering the cost $\tilde c$, it may happen that the $\tilde c$-superdifferential contains many points, which actually are probability measures of $\P(Y)$, although the $c$-superdifferential is a singleton. However, we have the advantage that the $\tilde c$-superdifferential are convex sets. 
Consider $X=Y=M$ 
%a compact Riemmanian manifold and 
and a cost
$c$ %a Lagrangian cost 
satisfying a  shortening principle. 
We briefly sketch the reasonings of \cite[Chapter 9]{v2}. Suppose that the following conditions are satisfied
\begin{enumerate}
\item There exists  $D\subset M$ with $\mu(D)=0$ such that $D$ intersects every nontrivial continuous curve over $M$.
\item The cost $c$ satisfies a  shortening principle.
\item The superdifferential $\partial ^c \psi $ is connected.
\end{enumerate}
By assumption $(2)$ it is possible to define a function $F : \gamma^{x,y}(\frac{1}{2})\mapsto x$ with $y\in \partial ^c \psi (x)$ having as domain the mid point of geodesics over $M$. Indeed, by definition of $c$-superdifferential we get
$$ \psi(x_1)-\psi(x)\leq c(x_1,y)-c(x,y) \quad \psi(x)-\psi (x_1)\leq c(x,y_1)-c(x_1,y_1) $$
 for every $y\in \partial ^c\psi(x), y_1\in \partial ^c\psi(x_1)$. It follows
 $$ c(x_1,y_1)-c(x,y_1)\leq c(x_1,y)-c(x,y) \Rightarrow c(x_1,y_1)+c(x,y)\leq c(x_1,y)+c(x,y_1).$$
 Hence, the shortening principle implies that 
 \begin{equation}\label{shortening}d(x,x_1)\leq K d\left(\gamma^{x,y}\left (\frac{1}{2}\right ), \eta^{x_1,y_1}\left (\frac{1}{2}\right )\right ).\end{equation}
 By the inequality \eqref{shortening} it follows that $F$ is well defined and moreover it results a Lipschitz, actually also an Holder condition works as well, map. By condition $(1)$ and $(3)$ it is possible to show that the set of points on which $\partial ^c \psi$ is not single valued is of null measure. Indeed, let $y_1,y_2 \in \partial ^c \psi(x)$. By condition $(3)$ consider a continuous curve $\rho_t$ lying in $\partial ^c \psi(x)$ connecting $y_1,y_2$. Therefore it is defined the non-trivial continuous curve $m_t=\gamma^{x,\rho_t}(\frac{1}{2})$. Hence, $x=F(m_t)$. By point $(1)$ it follows that $x\in F(D)$ which is a null measure set. 
Since $c(x,y)=\tilde c(x,\delta _y)$, we would like to prove  that  the $\tilde c$-superdifferential are a.e. single valued. %may contain at most one Dirac delta. 
The analogue condition for $\tilde c$ superdifferential is 
$$ \tilde c(x_1, \lambda _1)+\tilde c(x,\lambda )\leq \tilde c(x_1,\lambda)+ \tilde c(x,\lambda _1).$$
By definition, the above inequality leads to
$$ MK(c,\delta_{x_1}, \lambda _1)+ MK(c,\delta_{x}, \lambda )\leq MK(c,\delta_{x_1}, \lambda )+MK(c,\delta_{x}, \lambda _1).$$
A shortening principle should allow to estimate the distance $d(x,x_1)$ by the distance between mid points of geodesics in $M$ or in $\P(M)$.
The main problem is to find a null set $D$, due to the fact that we are now dealing with the space $M\times \P(M)$. 
%Sarebbe sufficiente gestire la disuguaglianza di sopra per combinazioni convesse di due delta.
%%%%%%%%%%%%%%%%%%%%%%%%%%%%%%%%%%%%%%%%%%%%%%%%%%%%%%%%%%%%%%%%%%%%%%%%%%%%%%%%%%%%
\subsection{Twist condition}
Assume the cost function $c$ satisfies  the twist condition \eqref{twist}. 
Under differentiability requirement, for instance on an open and bounded set $\Omega \subset \R^N$, with a uniform Lipschitz condition
\begin{equation}\label{lip}
|c(x,y)-c(x',y)|\leq K |x-x'|, \quad \forall y\in Y,
\end{equation}
this means that the map $y\mapsto \nabla _x c(x,y)$ is injective. It turns out that $c$-concave functions $\psi$ are Lipschitz on $\Omega$. 
Indeed, it suffices to compute
$$ \psi(x_2)= \inf \{c(x_2,y)+t, (y,t)\in A\times B \} =$$ $$ =\inf \{c(x_2,y)-c(x_1,y)+c(x_1,y)+t, (y,t)\in A\times B \}\leq K|x_1-x_2|+\psi(x_1).$$
Moreover,  
if $\psi$ is differentiable at $x\in \Omega$, for every $y\in \partial ^c \psi (x)$, we have  
$$ \frac{\psi(x+tv)-\psi(x)}{t}\leq \frac{c(x+tv,y)-c(x,y)}{t}.$$ Passing to the limit as $t\rightarrow 0^+$ we get 
$$\langle \nabla \psi - \nabla _x c(x,y), v \rangle \leq 0.$$ By the arbitrariness of $v$ it follows 
$\nabla \psi (x) = \nabla _x c(x,y)$ (see also \cite{carlier}). Therefore, if $\mu$ is absolutely continuous with respect to the Lebesgue measure, then the twist condition implies  that   the superdifferential $\partial ^c\psi (x)$ is a singleton for  $\mu$-a.e. $x\in \Omega$. 
The same reasoning applies as well directly for the cost $\tilde c$. Indeed, since $\nabla _x \tilde c(x,\lambda) = \int_Y \nabla _x c(x,y) \ d\lambda $ we have that $\tilde c$-superdifferentials contains at most one delta iff the cost $c$ satisfies the twist condition. \\
%~~~~~~~~~~~~~~~~~~~~
In the sequel  we sketch  a related  approach to show  existence and uniqueness for the Monge problem.\\ 
Assume that $c(x,y)$ is a Caratheodory function,  hence by Lemma~\ref{reg} $c$ is continuous.
Approximate the target measure $\nu$ by a finite convex combination of Dirac deltas, say by  $\nu_n\in \P(Y)$. Consider the problem $MK(c, \mu , \nu_n)$.
The optimal transport plans of this approximation problem are of course extremal points of $\Pi(\mu, \nu_n)$. It  can be shown that these extremal points, which are supported on the graph of a $c$-concave function $\psi _n$, are of the form $(Id\times t)_\#\mu$ (see section $4$), i.e. the Monge problem $M(c,\mu,\nu_n)$ admits a unique solution. 
% As standard in convex analysis, see \cite{solution-kant, extreme},  it follows that any optimal plan has the form $$\pi_n=\sum_{i=1}^{n+1} \alpha_i \delta_{u_i(x)}\otimes \mu .$$ However, since the superdifferentials $\partial _c \psi_n$ are nonempty and single valued by the twist condition,  it follows that  actually $\pi_n =(Id\times t_n)_\#\mu$, i.e. the Monge problem $M(c,\mu,\nu_n)$ admits a unique solution. 

By the uniform Lipschitz condition \eqref{lip} it follows that the sequence of $c$-concave  functions $\psi_n$ is equi-Lipschitz. Indeed, fixed $x,x'\in \Omega$, for $y_n\in \partial_c \psi_n(x)$ we have
$$ \psi_n(x')-\psi_n(x)\leq c(x',y_n)-c(x,y_n)\leq K|x-x'|,$$ while for $y'_n\in \partial_c \psi_n(x')$ we have
$$ \psi_n(x)-\psi_n(x')\leq c(x,y'_n)-c(x',y'_n)\leq K|x-x'|.$$
By the Ascoli-Arzel\'{a} Theorem we may suppose that $\psi_n \rightarrow \psi $,  uniformly on compact subsets. Observe that $\psi$ is $c$-concave as well. 
Indeed, recall that a map $\psi$ is $c$-concave iff $\psi=\psi^{cc}$, where 
$$\psi^{cc}(x)=\inf\{ \psi (x)\leq f(x) : \ f \:\mbox{c-concave} \}.$$ Fixed $\eps >0$, we find a large integer $ n$ such that $|\psi_n(x)- \psi (x)|<\eps$. Since $\psi_n$ is $c$-concave we get 
$$ \psi (x) < \eps + \psi_n(x)\Rightarrow \psi^{cc}(x)\leq \psi_n(x)+\eps \leq \psi(x)+2\eps .$$  By the arbitrariness of $\eps$ we obtain $ \psi ^{cc}\leq \psi$. 

Let $y_n=t_n(x)\in \partial _c \psi_n(x)$. By passing to a subsequence we may suppose that $y_n\rightarrow y$. Since
$$   \psi_n(x')-\psi_n(x)\leq c(x',y_n)-c(x,y_n)\leq K|x-x'|,$$ passing to the limit as $n\rightarrow + \infty$ we get
$$ \psi(x')-\psi(x)\leq c(x',y)-c(x,y).$$ Therefore $y\in \partial _c\psi (x)$. Since the $c$-superdifferential is a singleton, the whole sequence $y_n$ converges to $y$. We set  $t(x)=y$. As limit of measurable maps, $t$ is measurable as well. Moreover, it results
$$ \int _\Omega f(t(x)) d\mu = \lim_{n\rightarrow + \infty}\int _\Omega  f(t_n(x))d\mu =\lim_{n\rightarrow + \infty}\int_Y f(y)d\nu _n = \int_Yf(y) d\nu.$$ Hence $t_\#\mu=\nu$. Since the graph $(x,t(x))$ is supported on     $ \partial _c\psi (x)$ it follows, see \cite[Th. 3.22]{pratelli},  that $t$ is an optimal transport map.  
For a related approximation procedure of the Kantorovich problem see \cite{solution-kant}.\\
Hence, under the twist condition for the cost $\tilde c$, we would have existence and uniqueness of optimal transport plans in every fixed transport class.
% of that for every transport class there exists a unique optimal transport plan in such a transport class. 
Therefore, an interesting question is that of finding condition on the cost $c$ ensuring the twist condition for $\tilde c$. 
%%%%%%%%%%%%%%%%%%%%%%%%%%%%%%%%%%%%%%%%%%%%%%%%%%%%%%%%%%%%%%%%%%%%%%%%%%%%%%%%%%%%%
%%%%%%%%%%%%%%%%%%%%%%%%%%%%%%%%%%%%%%%%%%%%%%%%%%%%%%%
\section{Existence in discrete  transport classes}
To treat discrete measures we sketch existence for the Monge problem in this setting. For an extensive  discussion of this case we refer to \cite{abdellaoui, cuesta-albertos, location}. 

Let $\nu =\sum_i a_i \delta_{y_i}$ be a discrete probability measure over a metric space $Y$. Let $\gamma$ be an optimal transport plan between $\mu$ and $\nu$. Denote by $\Gamma = {\rm supp} (\gamma)$. By optimality, we have that $\Gamma$ is a $c$-cyclically monotone set of $X\times Y$. Define $A_{i,j}=\{x\in X \ :\ (x,y_i),(x,y_j) \in \Gamma \}$. Let $x,x'\in A_{i,j}$. By $c$-cyclically monotonicity we get
$$ c(x,y_i)+c(x',y_j)\leq c(x,y_j)+c(x',y_i)\leq c(x,y_i)+c(x',y_j).$$
Therefore, we obtain
$$c(x,y_i)-c(x,y_j)=c(x',y_i)-c(x',y_j)=\lambda_{i,j}.$$ 
Under the assumption of $\mu$  $c$-continuous, i.e. $$\mu \left (\{x\in X \ : \ c(x,y_i)-c(x,y_j)=\lambda_{i,j}\}\right )=0,$$ for every $y_i,y_j \in Y, \lambda_{i,j}\in \R$, we have that $\mu(A_{i,j})=0$. Therefore, since the set of splitting masses is given by 
$A=\bigcup _{i,j} A_{i,j}$, if $\mu$  is $c$-continuous then $\mu (A)=0$. Hence, the transport plan $\gamma$ is induced by a transport map. 

\subsection{Existence in some transport classes} Let $\Lambda$ be an atomic transport class. Optimal transport plan for $MK(\tilde c, \mu,\Lambda )$ are of the form
$\sum _{i} \alpha_i\delta_{f_i} \otimes \mu$ (see \cite{extreme, solution-kant}).  For every index it results $f_i(x)\in \partial ^{\tilde c} \psi (x)$. By linearity of $\tilde c$ it follows $f(x)=\sum \alpha_i f_i \in \partial ^{\tilde c} \psi (x)$. Therefore, the transport map $f(x)$ is optimal for 
$M(\tilde c, \mu, (f)_ \# \mu)$. 

%Passing to the limit we also obtain existence in some limit transport class $\Lambda = f_\# \mu$?Quali classi di trasporto si ottengono in questo modo?
\subsection{Non-existence in some transport classes}
Consider a discrete transport class given by $\Lambda = \sum _i a_i \delta_{\lambda _i}$. Consider the cost $c(x,y)=\langle x, y \rangle$.  For $i,j$ observe that 
$$ \tilde c(x,\lambda_i)-\tilde c(x,\lambda _j)=\langle x,  \int _Y y\ d(\lambda_i-\lambda _j ) \rangle .$$
Therefore, $M(\tilde c, \mu, \Lambda)$ admits solution iff  $\beta (\lambda _i)\neq \beta (\lambda _j)$. 
Analogously, for cost $a(x)b(y)$, if $\mu (\{x \in X \  : \ a(x)=k \})=0$ for every $k\in \R$, it turns out that $M(\tilde c, \mu, \Lambda)$ admits solution iff $\int _Y b(y) d(\lambda_i- \lambda _j) \neq 0$ for every $i,j$. 

%Cosa succede per un costo generico, ad esempio $|x-y|^p$?
In this section we have seen that the usual approaches to solve the Monge problem give rise to some difficulties in the setting of the transport problem in a transport class. In some sense, these methods are specific for the transport class corresponding to transport maps. The question to establish existence in different transport classes remains open. We have shown that also for the case of a discrete transport class the answer could be negative. Therefore, from this point of view, the Monge problem corresponds to a \emph{lucky} case for the fixed transport class. This feature of transport classes also naturally leads to the following question. 
The existence results for the Monge problem are usually stated in the following form: under some assumption on the spaces, on the first marginal $\mu$ and on the cost $c(x,y)$,  for every second marginal $\nu$ the Monge problem admits solutions. 
Having in mind the abstract Monge problem  $ M(\tilde c, \mu, \Lambda)$,  
%this is not the case. For discrete measures $\Lambda$, see Section $4$,  for the quadratic cost it results that $ M(\tilde c, \mu, \Lambda)$ may not admit solutions. Therefore, in the abstract setting, 
it could be also interesting to consider, under some assumption on the spaces, on the first marginal $\mu$ and on the cost $c(x,y)$,  the question for what kind of second marginals the corresponding Monge problem admits solutions. 
%%%%%%%%%%%%%%%%%%%%%%%%%%%%%%%%%%%%%%%%%%%%%%%%%%%%%%%%%%%%%%%%%%%%%%%

\end{document}